\documentclass[12pt, leqno,twoside]{article}
\usepackage{amssymb}
\usepackage{amsmath}
\usepackage{amsthm}
\usepackage{graphicx}

\usepackage[active]{srcltx}

\allowdisplaybreaks

\pagestyle{myheadings}\markboth{ P. Koskela and Z. Zhu}
{The product of Sobolev-extension domains }

\textwidth=15cm
\textheight=21.2cm
\oddsidemargin 0.45cm
\evensidemargin 0.45cm

\parindent=16pt

\def\dist{{\mathop\mathrm{\,dist\,}}}

\def\bint{{\ifinner\rlap{\bf\kern.25em--}
\int\else\rlap{\bf\kern.45em--}\int\fi}\ignorespaces}

\def\bbint{{\ifinner\rlap{\bf\kern.25em--}
\hspace{0.078cm}\int\else\rlap{\bf\kern.45em--}\int\fi}\ignorespaces}

\def\diam{{\mathop\mathrm{\,diam\,}}}

\newtheorem{thm}{Theorem}[section]
\newtheorem{lem}{Lemma}[section]
\newtheorem{prop}{Proposition}[section]

\newtheorem{cor}{Corollary}[section]
\newtheorem{defn}{Definition}[section]

\numberwithin{equation}{section}

\begin{document}

\arraycolsep=1pt

\title{\Large\bf Product of extension domains is still an extension domain
}
\author{Pekka Koskela and Zheng Zhu }
\date{ }
\maketitle
\begin{abstract}
Our main result Theorem \ref{theorem2} gives the following functional property of the class of $W^{1,p}$-extension domains. Let $\Omega_{1}\subset\mathbb{R}^{n}$ and $\Omega_{2}\subset\mathbb{R}^{m}$ both be $W^{1,p}$-extension domains for some $1<p\leq\infty$. We prove that $\Omega_{1}\times\Omega_{2}\subset\mathbb{R}^{n+m}$ is also a $W^{1,p}$-extension domain. We also establish the converse statement.
\end{abstract}
\section{Introduction}

Let $\Omega\subset\mathbb{R}^{n}$ be an open set. For $1< p\leq \infty$, we let $W^{1,p}(\Omega)$ denote the corresponding Sobolev space of all functions $u\in L^{p}(\Omega)$ whose first order distributional partial derivatives on $\Omega$ belong to $L^{p}(\Omega)$. This space is normed by 
\begin{equation}
\|u\|_{W^{1,p}(\Omega)}:=\sum_{0\leq|\alpha|\leq 1}\|D^{\alpha}u\|_{L^{p}(\Omega)}.\nonumber
\end{equation}
We say that $u\in L^{p}(\Omega)$ is $ACL$ ($absolutely\ continuous\ on\ lines$), if $u$ has a representative $\widetilde{u}$ that is absolutely continuous on almost all line segments in $\Omega$ parallel to the coordinate axes. Then $u\in W^{1,p}(\Omega)$ if and only if $u$ belongs to $L^{p}(\Omega)$ and has a representative $\widetilde{u}$ which is $ACL$ and whose (classical) partial derivatives belong to $L^{p}(\Omega)$, see e.g. Theorem A.15 in \cite{Pekka} and Theorem 2.1.4 in \cite{ziemer}.

We say that $\Omega\subset\mathbb{R}^{n}$ is a $W^{1,p}$-extension domain if there exists a constant $C\geq 1$ which only depends on $\Omega,n,p$ such that for every $u\in W^{1,p}(\Omega)$ there exists a function $Eu\in W^{1,p}(\mathbb{R}^{n})$ with $Eu\big|_{\Omega}\equiv u$ and 
\begin{equation}
\|Eu\|_{W^{1,p}(\mathbb{R}^{n})}\leq C\|u\|_{W^{1,p}(\Omega)}.\nonumber
\end{equation}
For example, every Lipschitz domain is a $W^{1,p}$-extension domain for all $1\leq p\leq\infty$ by the result of Calder\'on and Stein \cite{Stein}. It is easy to give examples of domains that fail to be extension domains, for example, the slit disk $\Omega:= D^{2}(0,1)\setminus\{(x_{1},0):0\leq x_{1}<1\}$. In general, the extension property for a fixed $\Omega$ may depend on the value of $p$, see \cite{Pekka98}, \cite{Shvar2} and \cite{KRZ}.

In \cite{HKT}, it was shown that any bi-Lipschitz image of a $W^{1,p}$-extension domain is also a $W^{1,p}$-extension domain: if $\Omega\subset\mathbb{R}^{n}$ is a $W^{1,p}$-extension domain and $f:\Omega\rightarrow\Omega'\subset\mathbb{R}^{n}$ is bi-Lipschitz, then $\Omega'$ is also a $W^{1,p}$-extension domain. Our main result gives another functional property of Sobolev extension domain.

 \begin{thm}\label{theorem2}
Let  $1<p\leq\infty$. If $\Omega_{1}\subset\mathbb{R}^{n}$ and $\Omega_{2}\subset\mathbb{R}^{m}$ are $W^{1,p}$-extension domains, then 
$\Omega_{1}\times\Omega_{2}\subset\mathbb{R}^{n+m}$ is also a 
$W^{1,p}$-extension domain. Conversely, if $\Omega_{1}\subset\mathbb{R}^{n}$ and $\Omega_{2}\subset\mathbb{R}^{m}$ are domains so that $\Omega_{1}\times\Omega_{2}\subset\mathbb{R}^{n+m}$ is a 
$W^{1,p}$-extension domain, then both $\Omega_1$ and $\Omega_2$ are necessarily
$W^{1,p}$-extension domains.
\end{thm}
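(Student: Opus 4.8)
The plan is to treat the two directions separately. For the converse direction (which I expect to be the easier one), I would use a restriction/slicing argument. Suppose $\Omega_1\times\Omega_2$ is a $W^{1,p}$-extension domain and let $u\in W^{1,p}(\Omega_1)$; I want to extend $u$ from $\Omega_1$ to $\mathbb{R}^n$. The idea is to fix a nice cutoff function $\eta\in C_c^\infty(\mathbb{R}^m)$ with $\int_{\mathbb{R}^m}\eta^p\,dy=1$ and support in a ball $B$ chosen so that $B$ meets $\Omega_2$ in a set of positive measure (here I use that $\Omega_2$ is a domain, hence open and nonempty), then consider $v(x,y):=u(x)\eta(y)$ on $\Omega_1\times\Omega_2$. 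One checks $v\in W^{1,p}(\Omega_1\times\Omega_2)$ with norm controlled by $\|u\|_{W^{1,p}(\Omega_1)}$ (with a constant depending on $\eta$, hence on $\Omega_2$), apply the extension operator for $\Omega_1\times\Omega_2$ to get $Ev\in W^{1,p}(\mathbb{R}^{n+m})$, and then recover an extension of $u$ by integrating against $\eta$ in the $y$-variable: set $Eu(x):=\int_{\mathbb{R}^m}Ev(x,y)\eta(y)^{p-1}\,dy$ (or use a Fubini/slicing argument to select a good slice $y_0$ and set $Eu(x)=Ev(x,y_0)/\eta(y_0)$). Either way, $Eu|_{\Omega_1}=u$ and $\|Eu\|_{W^{1,p}(\mathbb{R}^n)}\lesssim\|Ev\|_{W^{1,p}(\mathbb{R}^{n+m})}\lesssim\|v\|_{W^{1,p}(\Omega_1\times\Omega_2)}\lesssim\|u\|_{W^{1,p}(\Omega_1)}$. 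The case $p=\infty$ needs the obvious modification (take $Eu(x)=Ev(x,y_0)$ for a suitably chosen $y_0$). By symmetry the same works for $\Omega_2$.

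For the forward direction I would build an extension operator on $\Omega_1\times\Omega_2$ out of the given extension operators $E_1$ on $\Omega_1$ and $E_2$ on $\Omega_2$. The natural first guess is the "composition" $E:=E_2^{(y)}\circ E_1^{(x)}$, i.e. first extend in the $x$-variable for a.e. fixed $y\in\Omega_2$, then extend the result in the $y$-variable for a.e. fixed $x\in\mathbb{R}^n$. The point is that the ACL characterization recalled in the introduction makes the mixed object tractable: extending in $x$ preserves $W^{1,p}$ regularity in $x$ for a.e. $y$, and one needs to check that the partial derivatives in $y$ are also controlled, which is where the measurability-in-parameters of the operator $E_1$ and a Fubini argument enter. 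A cleaner route, and the one I would pursue, is to go through the geometric characterization of $W^{1,p}$-extension domains for $1<p<\infty$: such domains are exactly the domains satisfying a measure-density condition together with a local version of the quasihyperbolic/Sobolev-Poincaré type condition — or, even more usefully, one can use that for $1<p<\infty$ a domain is a $W^{1,p}$-extension domain iff it is an $L^{1,p}$-extension domain and satisfies measure density (results of Hajłasz–Koskela–Tuominen). Then the product structure is checked at the level of (i) measure density, which is immediate since $|B((x,y),r)\cap(\Omega_1\times\Omega_2)|$ factors and each factor has the density lower bound on the appropriate ball, and (ii) the homogeneous seminorm extension, handled by the composition/slicing argument above where the absence of the $L^p$-norm of $u$ itself removes the main technical friction.

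The main obstacle, in my view, is the forward direction when $p=\infty$, and more generally making the composition operator $E_2\circ E_1$ rigorous: one must know that $E_1$ can be chosen to act measurably on the $y$-parameter (so that $(x,y)\mapsto (E_1 u(\cdot,y))(x)$ is jointly measurable and lies in the right space), and that the two extensions interact so that \emph{both} $\partial_{x_i}$ and $\partial_{y_j}$ of the final function are in $L^p(\mathbb{R}^{n+m})$. For $W^{1,\infty}=\mathrm{Lip}$ this is more transparent (a $W^{1,\infty}$-extension domain is quantitatively quasiconvex, and one can use McShane-type Lipschitz extensions coordinate-blockwise), so I would treat $p=\infty$ as a special, essentially separate, case. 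For $1<p<\infty$ the decoupling via "measure density + homogeneous extension" is what makes the argument go through cleanly, and verifying that the homogeneous ($\dot W^{1,p}$) extension property is preserved under products — via Fubini and the ACL description — is the technical heart of the proof.
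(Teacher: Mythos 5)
Your converse argument is essentially the one in the paper: multiply $u$ by a cutoff in the $y$-variable, extend the product from $\Omega_1\times\Omega_2$, and recover an extension of $u$ from almost-every slices via Fubini and the ACL characterization. Two small points: for the integral-recovery formula $\int Ev(x,y)\eta(y)^{p-1}dy$ to restrict to (a multiple of) $u$ on $\Omega_1$ you need $\mathrm{supp}\,\eta\subset\Omega_2$, not merely a ball meeting $\Omega_2$ (this is harmless since $\Omega_2$ is open, and the good-slice variant avoids it); and you do not actually need to track norms, because Theorem \ref{thm2} says that for $1<p<\infty$ the mere existence, for each $u\in W^{1,p}(\Omega_1)$, of some $W^{1,p}(\mathbb{R}^n)$ function restricting to $u$ already implies that $\Omega_1$ is a $W^{1,p}$-extension domain; this is exactly how the paper concludes.

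The forward direction, however, has a genuine gap. You correctly isolate the obstruction---that the slice-wise composition $E_2^{(y)}\circ E_1^{(x)}$ requires $(x,y)\mapsto\bigl(E_1u(\cdot,y)\bigr)(x)$ to be measurable in $y$ and, more seriously, to possess controlled distributional $y$-derivatives---but you do not supply the idea that overcomes it, and your proposed ``cleaner route'' does not remove it: replacing $W^{1,p}$ by the homogeneous seminorm plus measure density changes nothing about the need to differentiate the $x$-extension in $y$ (and the equivalence ``$W^{1,p}$-extension iff $L^{1,p}$-extension plus measure density'' is not available in the cited sources in the form you would need). An abstract extension operator applied for each fixed $y$ gives no handle whatsoever on $\partial_{y_j}$ of the resulting function. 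The paper's resolution is concrete: by Theorem \ref{thm2} any $W^{1,p}$-extension domain ($1<p<\infty$) is Ahlfors regular and Shvartsman's explicit Whitney-type operator $Eu_y(x)=\sum_{Q}\phi_Q(x)P_{H_Q}u_y(x)$ applies, where $P_{H_Q}$ is an average over a quasi-cube $H_Q\subset\overline{\Omega_1}$. Because this operator acts by averaging in the $x$-variable only, for $u$ in the dense class $C^1\cap L^\infty\cap W^{1,p}(\Omega_1\times\Omega_2)$ one gets absolute continuity of $E_1u$ on lines in the $y_j$-directions and the commutation $\partial_{y_j}E_1u=E_1(\partial_{y_j}u)$ outside $\overline{\Omega_1}\times\Omega_2$, after which the $L^p$-bounds for the $y$-derivatives follow from boundedness of $E$ on $L^p(\Omega_1)$ and Fubini, and measurability in $y$ is automatic from the formula; this yields Theorem \ref{theorem1}, which is then applied twice (once in each group of variables) to settle the product case, with $p=\infty$ handled separately via uniform local quasiconvexity, as you also suggest. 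Without this averaging/locality structure (or an equivalent substitute), your composition plan remains a plan rather than a proof.
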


According to Theorem 7 in \cite{HKT} (see \cite{zobin} for related results), 
a domain $\Omega$ is a 
$W^{1,\infty}$-extension domain if and only if it is uniformly locally 
quasiconvex, that is, there exist  positive constants $C$ and $R$, such that 
for all $x,y\in\Omega$ with $|x-y|<R$, there exists a curve 
$\gamma_{x,y}\subset\Omega$ from $x$ to $y$ with 
\begin{equation}
l(\gamma_{x,y})\leq C |x-y|.\nonumber
\end{equation}
Here $l(\gamma_{x,y})$ is the length of the curve $\gamma_{x,y}$. It is easy to 
check that the product of uniformly locally quasiconvex domains is still 
uniformly locally quasiconvex, and hence we only need to prove the first part of
Theorem \ref{theorem2} for $1<p<\infty$.

Our proof of the first part of Theorem \ref{theorem2} is based on the 
existence of an explicit 
extension operator constructed by Shvartsman in \cite{Shvar}. A result from 
\cite{HKT} allows us to employ  this operator. 
This procedure could in principle 
also be tried for the case of the higher order Sobolev spaces $W^{k,p}$, 
$k\geq 2$, but one does not seem to obtain suitable norm estimates. 
We would like to know whether the first part of Theorem \ref{theorem2} extends to the case of 
higher order Sobolev spaces or not; the second part does extend as can be seen from
our proof below.

\section{Preliminaries}

\subsection{Definitions and preliminary results}
Our notation is fairly standard. Throughout the paper $C,C_{1},C_{2},...$ or $\gamma,\gamma_{1},\gamma_{2},...$ will be generic positive constants which depend only on the dimension $n$, the domain $\Omega$ and indexes of spaces (s, p, q, etc.). These constants may change even in a single string of estimates. The dependence of a constant on certain parameters is expressed, for example, by the notation $\gamma=\gamma(n,p)$. We write $A\approx B$ if there is a constant $C\geq 1$ such that $A/C\leq B\leq CA$.

\begin{defn}\label{Ahlfors}
A measurable set $A\subset\mathbb{R}^{n}$ is said to be Ahlfors regular $($shortly, regular$)$ if there are constants $C_{A}\geq 1$ and $\delta_{A}>0$ such that, for every cube $Q$ with center in $A$ and with diameter $\diam Q\leq\delta_{A}$, we have
\begin{equation}\label{density}
|Q|\leq C_{A}|Q\cap A|.
\end{equation}
\end{defn}
 
 Given $u\in L^{p}_{loc}(\mathbb{R}^{n})$, $1< p\leq\infty$, and a cube $Q$, we set
\begin{equation}
\Lambda(u;Q)_{L^{p}}:=|Q|^{\frac{-1}{p}}\inf_{C\in\mathbb{R}}\|u-C\|_{L^{p}(Q)}=\inf_{C\in\mathbb{R}}\left(\frac{1}{|Q|}\int_{Q}|u-C|^{p}dx\right)^{\frac{1}{p}},\nonumber
\end{equation}
see Brudnyi \cite{Brud} for the definition of a more general case. Sometimes, $\Lambda(u;Q)_{L^{p}}$ is also called the local oscillation of $u$, for instance, 
see Triebel \cite{Trie}. This quantity is the main object on the theory of local polynomial approximations which provides a unified framework for the description of a large family of spaces of smooth functions. We refer the readers to Brudnyi \cite{Brud1}-\cite{Brud5} for the main ideas and results in local approximation theory.

Given a locally integrable function $u$ on $\mathbb{R}^{n}$, we define its sharp maximal function $u_{1}^{\#}$ by setting
\begin{equation}\label{fsmf}
u_{1}^{\#}(x):=\sup_{r>0}r^{-1}\Lambda(u;Q(x,r))_{L^{1}}.
\end{equation}

In \cite{Calde}, Calder\'on proved that, for $1<p\leq\infty$, a function $u$ is in $W^{1,p}(\mathbb{R}^{n})$, if and only if $u$ and $u_{1}^{\#}$ are both in $L^{p}(\mathbb{R}^{n})$. Moreover, up to constants depending only on $n$ and $p$, we have that
\begin{equation}\label{norm}
\|u\|_{W^{1,p}(\mathbb{R}^{n})}\approx \|u\|_{L^{p}(\mathbb{R}^{n})}+\|u_{1}^{\#}\|_{L^{p}(\mathbb{R}^{n})}.
\end{equation}

This characterization motivates the following definition. Given $1<p\leq\infty$, a function $u\in L^{p}_{loc}(A)$, and a cube $Q$ whose center is in $A$, we let $\Lambda(u;Q)_{L^{p}(A)}$ denote the normalized best approximation of $f$ on $Q$ in $L^{p}$-norm:
\begin{eqnarray}\label{pfsmf}
\Lambda(u;Q)_{L^{p}(A)}&:=&|Q|^{\frac{-1}{p}}\inf_{C\in\mathbb{R}}\|u-C\|_{L^{p}(Q\cap A)}\nonumber\\
                           &=&\inf_{C\in\mathbb{R}}\left(\frac{1}{|Q|}\int_{Q\cap A}|u-C|^{p}dx\right)^{\frac{1}{p}}.
\end{eqnarray}
By $u^{\#}_{1,A}$, we denote the sharp maximal function of $u$ on $A$,
\begin{equation}
u^{\#}_{1,A}(x):=\sup_{r>0}r^{-1}\Lambda(u;Q(x,r))_{L^{1}(A)},\ \ x\in A.\nonumber
\end{equation}
Notice that $u_{1}^{\#}=u^{\#}_{1,\mathbb{R}^{n}}$.

The following trace theorem by Shvartsman from \cite{Shvar}, relates local polynomial approximation to extendability.

\begin{thm}\label{thm1}
Let $A$ be a regular subset of $\mathbb{R}^{n}$. Then a function $u\in L^{p}(A)$, $1<p\leq\infty$, can be extended to a function $Eu\in W^{1,p}(\mathbb{R}^{n})$ if and only if 
\begin{equation}
u_{1,A}^{\#}:=\sup_{r>0}r^{-1}\Lambda(u;Q(\cdot,r))_{L^{1}(A)}\in L^{p}(A).\nonumber
\end{equation}
In addition,
\begin{equation}\label{local norm}
\|u\|_{W^{1,p}(\mathbb{R}^{n})\big|_{A}}\approx\|u\|_{L^{p}(A)}+\|u^{\#}_{1,A}\|_{L^{p}(A)}
\end{equation}
with constants of equivalence depending only on $n,k,p,C_{A}$ and $\delta_{A}$. Here
\begin{equation}
\|u\|_{W^{1,p}(\mathbb{R}^{n})|_{A}}:=\inf\{\|Eu\|_{W^{1,p}(\mathbb{R}^{n})}:Eu\in W^{1,p}(\mathbb{R}^{n}), Eu|_{A}\equiv u\ a.e.\}.\nonumber
\end{equation}
\end{thm}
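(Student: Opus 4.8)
The plan is to prove the two implications separately. The forward direction (extendability forces $u^\#_{1,A}\in L^p(A)$, together with the estimate controlling the right-hand side of \eqref{local norm} by the left) follows quickly from Calder\'on's characterization \eqref{norm} and uses neither the regularity of $A$ nor the restriction $p>1$. Indeed, suppose $u=Eu|_A$ a.e.\ for some $Eu\in W^{1,p}(\rn)$. For any cube $Q=Q(x,r)$ centered at $x\in A$, restricting the defining integral to $Q\cap A$ and using $Eu=u$ there gives
\[
\Lambda(u;Q)_{L^1(A)}=|Q|^{-1}\inf_{C}\int_{Q\cap A}|Eu-C|\,dx\le |Q|^{-1}\inf_{C}\int_{Q}|Eu-C|\,dx=\Lambda(Eu;Q)_{L^1}.
\]
Taking the supremum over $r>0$ yields $u^\#_{1,A}(x)\le (Eu)^\#_1(x)$ for every $x\in A$. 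Since also $\|u\|_{L^p(A)}\le\|Eu\|_{L^p(\rn)}$, the equivalence \eqref{norm} gives $\|u\|_{L^p(A)}+\|u^\#_{1,A}\|_{L^p(A)}\ls\|Eu\|_{W^{1,p}(\rn)}$, and taking the infimum over admissible $Eu$ produces one half of \eqref{local norm}.

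The core is sufficiency: assuming $u^\#_{1,A}\in L^p(A)$, I would construct $Eu$ by the Whitney method. Decompose $\rn\setminus\overline{A}$ into Whitney cubes $\{Q_j\}$ with $\ell(Q_j)\approx\dist(Q_j,A)$, fix a subordinate partition of unity $\{\fai_j\}$ with $|\nabla\fai_j|\ls\ell(Q_j)^{-1}$, and for each $j$ pick a nearest point $a_j\in A$ together with a cube $Q_j^{*}=Q(a_j,\ell(Q_j))$ centered in $A$ of comparable scale. Ahlfors regularity (Definition \ref{Ahlfors}) guarantees $|Q_j^{*}\cap A|\gs|Q_j^{*}|$, so that the average $c_j:=|Q_j^*\cap A|^{-1}\int_{Q_j^*\cap A}u\,dx$ is well defined and comparable to the corresponding full average over $Q_j^*$. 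One then sets $Eu=u$ on $A$ and $Eu=\sum_j c_j\fai_j$ on $\rn\setminus\overline{A}$.

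Two estimates finish the construction, and both are where $p>1$ and Ahlfors regularity are indispensable. For the $L^p$ bound, $|c_j|$ is dominated by the average of $|u|$ over $Q_j^*\cap A$, hence by the Hardy--Littlewood maximal function $M(u\chi_A)$ evaluated near $Q_j$; boundedness of $M$ on $L^p(\rn)$ for $p>1$ gives $\|Eu\|_{L^p(\rn)}\ls\|u\|_{L^p(A)}$. For the gradient, on each Whitney cube one writes $\nabla Eu=\sum_j(c_j-c_k)\nabla\fai_j$ for a fixed reference index $k$, where only boundedly many comparable cubes contribute. Engulfing the relevant cubes $Q_j^*$ in a common cube $\hat{Q}$ centered in $A$ with $\ell(\hat Q)\approx\ell(Q_j)$, and using Ahlfors regularity to replace $|Q_j^*\cap A|$ by $|\hat Q|$, one obtains the chaining bound
\[
|c_j-c_k|\ls\Lambda(u;\hat Q)_{L^1(A)}\ls\ell(\hat Q)\,u^\#_{1,A}(x_{\hat Q}),
\]
so that $|\nabla Eu(x)|\ls u^\#_{1,A}(x_{\hat Q})\le M(u^\#_{1,A}\chi_A)(x)$; a second application of the $L^p$-boundedness of $M$ gives $\|\nabla Eu\|_{L^p(\rn\setminus A)}\ls\|u^\#_{1,A}\|_{L^p(A)}$.

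It remains to verify that $Eu$ genuinely lies in $W^{1,p}(\rn)$ — that the pointwise gradient is the weak one, with no singular contribution along $\partial A$ — and I expect this matching across the boundary to be the main technical obstacle. The cleanest route avoids checking the $ACL$ property directly and instead estimates the full sharp maximal function of the extension: a case analysis over cubes $Q(x,r)$ (those well inside $A$, those well inside the complement where $Eu$ is smooth with controlled gradient, and those straddling $\partial A$), combined with the chaining estimate above and Ahlfors regularity, should yield the pointwise bound $(Eu)^\#_1(x)\ls M(u^\#_{1,A}\chi_A)(x)+M(u\chi_A)(x)$ on all of $\rn$. The converse direction of \eqref{norm} then simultaneously places $Eu$ in $W^{1,p}(\rn)$ and furnishes the remaining half of \eqref{local norm}, completing the proof.
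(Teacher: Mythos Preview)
The paper does not actually prove this theorem: it is quoted verbatim as ``the following trace theorem by Shvartsman from \cite{Shvar}'', and Section~2.2 only \emph{describes} Shvartsman's extension operator (the Whitney decomposition, the partition of unity $\{\phi_Q\}$, the ``quasi-cubes'' $H_Q$ of Theorem~\ref{quasicube}, and the formula~\eqref{Extension}) for later use, without reproving the norm estimates. So there is no in-paper proof to compare against line by line.

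That said, your outline is sound and close in spirit to Shvartsman's argument, with one structural difference worth noting. You take $c_j$ to be the plain average of $u$ over $Q_j^{*}\cap A$ and then push all $L^p$ bounds through the Hardy--Littlewood maximal operator (this is where you use $p>1$). Shvartsman instead averages over the carefully built sets $H_Q\subset (10Q)\cap A$, whose crucial extra feature is the \emph{bounded overlap} $\sum_{Q}\chi_{H_Q}\le\gamma_2$ of Theorem~\ref{quasicube}(iii); this lets him sum $\|P_{H_Q}u\|_{L^p}^p$ directly without invoking $M$, and the same device drives his higher-order ($W^{k,p}$) theorem. For $k=1$ your maximal-function route is perfectly adequate and arguably more transparent. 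Two small things to tighten: handle large Whitney cubes ($\diam Q>\delta_A$) separately, since Ahlfors regularity only controls $|Q\cap A|$ at scales $\le\delta_A$ (Shvartsman sets $P_{H_Q}u=0$ there, cf.~\eqref{big cube}); and in your gradient bound, the inequality $u^{\#}_{1,A}(x_{\hat Q})\le M(u^{\#}_{1,A}\chi_A)(x)$ is not literal --- replace the point value by the infimum (or average) of $u^{\#}_{1,A}$ over $\hat Q\cap A$, which \emph{is} dominated by $M(u^{\#}_{1,A}\chi_A)(x)$ via regularity and Whitney geometry. Your closing strategy of bounding $(Eu)^{\#}_1$ pointwise and then appealing to \eqref{norm} is exactly how Shvartsman certifies membership in $W^{1,p}(\rn)$.
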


For a set $A\subset\mathbb{R}^{n}$ of positive Lebesgue measure, we set
\begin{equation}
C^{1,p}(A)=\{u\in L^{p}(A): u_{1,A}^{\#}\in L^{p}(A)\},\ \ \|u\|_{C^{1,p}(A)}=\|u\|_{L^{p}(A)}+\|u_{1,A}^{\#}\|_{L^{p}(A)}.\nonumber
\end{equation}
A result of Haj\l asz, Koskela and Tuominen (Theorem 5 in \cite{HKT}) that partially relies on Theorem \ref{thm1} states the following

\begin{thm}\label{thm2}
Let $\Omega\subset\mathbb{R}^{n}$ be a domain and fix $1<p<\infty$. 
Then the following conditions are equivalent:

$(a)$\ For every $u\in W^{1,p}(\Omega)$ there exists a function $Eu\in W^{1,p}(\mathbb{R}^{n})$ such that $Eu\big|_{\Omega}=f$ a.e.

$(b)$\ $\Omega$ satisfies the measure density condition $(\ref{density})$ and $C^{1,p}(\Omega)=W^{1,p}(\Omega)$ as sets and the norms are equivalent.

$(c)$\ $\Omega\subset\mathbb{R}^{n}$ is a $W^{1,p}$-extension domain.
\end{thm}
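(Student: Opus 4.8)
The plan is to establish the two substantive equivalences $(a)\Leftrightarrow(c)$ and $(b)\Leftrightarrow(c)$, the implication $(c)\Rightarrow(a)$ being trivial. I regard $(a)\Leftrightarrow(c)$ as a soft, functional-analytic statement: it asserts that if every $u\in W^{1,p}(\Omega)$ admits \emph{some} extension to $W^{1,p}(\mathbb{R}^{n})$, then the extensions may be chosen with a uniform norm bound. By contrast, $(b)\Leftrightarrow(c)$ is where the trace theorem of Shvartsman (Theorem \ref{thm1}) is decisive, once we know that a $W^{1,p}$-extension domain satisfies the measure density condition $(\ref{density})$.

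For $(a)\Rightarrow(c)$ I would invoke the open mapping theorem. The restriction operator $R\colon W^{1,p}(\mathbb{R}^{n})\to W^{1,p}(\Omega)$, $Rf:=f|_{\Omega}$, is bounded and linear, since $\Omega$ is open and $\|f|_{\Omega}\|_{W^{1,p}(\Omega)}\le\|f\|_{W^{1,p}(\mathbb{R}^{n})}$; and for $1<p<\infty$ both spaces are Banach. Condition $(a)$ says exactly that $R$ is surjective, so the induced map $W^{1,p}(\mathbb{R}^{n})/\ker R\to W^{1,p}(\Omega)$ is a Banach-space isomorphism, and there is a constant $C$ with
\[
\|u\|_{W^{1,p}(\mathbb{R}^{n})|_{\Omega}}=\inf\{\|f\|_{W^{1,p}(\mathbb{R}^{n})}:f|_{\Omega}=u\}\le C\|u\|_{W^{1,p}(\Omega)}
\]
for all $u\in W^{1,p}(\Omega)$. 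Picking a near-minimizer $Eu$ produces the uniform estimate in the definition, which is $(c)$; the reverse $(c)\Rightarrow(a)$ is immediate.

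For $(b)\Leftrightarrow(c)$ I would route everything through Theorem \ref{thm1}. Suppose $\Omega$ is regular, i.e. $(\ref{density})$ holds. Then Theorem \ref{thm1} identifies the trace space $\{f|_{\Omega}:f\in W^{1,p}(\mathbb{R}^{n})\}$ with $C^{1,p}(\Omega)$ and gives $\|u\|_{W^{1,p}(\mathbb{R}^{n})|_{\Omega}}\approx\|u\|_{C^{1,p}(\Omega)}$. On the other hand, directly from the definitions, $\Omega$ is a $W^{1,p}$-extension domain if and only if this same trace space equals $W^{1,p}(\Omega)$ with $\|u\|_{W^{1,p}(\mathbb{R}^{n})|_{\Omega}}\approx\|u\|_{W^{1,p}(\Omega)}$, the bound $\lesssim$ being the extension estimate and $\gtrsim$ the trivial restriction estimate. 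Comparing these two descriptions of the single trace space shows that, for regular $\Omega$, being an extension domain is equivalent to the identification $C^{1,p}(\Omega)=W^{1,p}(\Omega)$ with equivalent norms. Hence $(b)\Rightarrow(c)$, and $(c)\Rightarrow(b)$ follows once we also check that an extension domain is regular.

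The main obstacle is thus the measure density $(\ref{density})$ for an extension domain, the only genuinely geometric step. Since pointwise testing is unavailable when $p\le n$, I would argue with averages. An extension domain supports the Sobolev inequality $\|v\|_{L^{p^{*}}(\Omega)}\lesssim\|v\|_{W^{1,p}(\Omega)}$ for $1<p<n$ (extend $v$, apply the global embedding on $\mathbb{R}^{n}$, and restrict), where $p^{*}=np/(n-p)$. Fixing $x_{0}\in\Omega$ and small $r$, test this against a Lipschitz cutoff $\varphi$ with $\varphi\equiv1$ on $B(x_{0},r/2)$, $\varphi\equiv0$ off $B(x_{0},r)$, and $|\nabla\varphi|\le 2/r$. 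Writing $m(r):=|B(x_{0},r)\cap\Omega|$, the left side is at least $m(r/2)^{1/p^{*}}$ while the right side is $\lesssim r^{-1}m(r)^{1/p}$, so that $m(r/2)^{1/p^{*}}\lesssim r^{-1}m(r)^{1/p}$. Solving for the larger ball yields $m(r)\gtrsim r^{p}m(r/2)^{1-p/n}$; because the exponent $1-p/n$ is strictly less than $1$, this recursion is self-improving, and iterating it upward from the small scales at which $B(x_{0},\rho)\subset\Omega$ has full density $m(\rho)=c_{n}\rho^{n}$ forces $m(r)\gtrsim r^{n}$ for all small $r$, which is $(\ref{density})$. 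The cases $p\ge n$ are handled in the same spirit using the corresponding Sobolev and Morrey inequalities. With $(\ref{density})$ in hand, the chain $(a)\Leftrightarrow(c)\Leftrightarrow(b)$ closes using only the open mapping theorem and Shvartsman's trace theorem.
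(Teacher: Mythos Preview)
The paper does not supply a proof of this theorem at all: it is quoted verbatim as Theorem~5 of Haj\l asz--Koskela--Tuominen \cite{HKT}, with the remark that it ``partially relies on'' Shvartsman's trace theorem (Theorem~\ref{thm1} here). So there is no in-paper argument to compare against; the relevant comparison is with \cite{HKT}, and your sketch is precisely the skeleton of that proof.

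Your outline is sound. The open-mapping argument for $(a)\Rightarrow(c)$ is the standard one (and the paper's definition of extension domain does not ask for a linear operator, so choosing a near-minimiser is enough). The reduction of $(b)\Leftrightarrow(c)$ to Theorem~\ref{thm1} is exactly how \cite{HKT} proceeds once regularity is known. Your identification of the measure density condition as the only geometric step, and the cutoff-plus-embedding recursion $m(r)\gtrsim r^{p}\,m(r/2)^{1-p/n}$, is correct; rewriting it as $g(r):=m(r)/r^{n}\gtrsim c\,g(r/2)^{1-p/n}$ makes the self-improvement transparent, since the exponent is strictly less than $1$ and the starting value $g(\rho_0)=c_n$ at a scale with $B(x_0,\rho_0)\subset\Omega$ is a fixed constant. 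Two small points worth tightening in a full write-up: include the $L^{p}$ term of $\varphi$ on the right-hand side (it is harmless for $r\le 1$ but should appear), and for $p\ge n$ spell out the replacement embedding you use (e.g.\ $W^{1,p}\hookrightarrow L^{q}$ for some $q>p$ when $p=n$, and Morrey's inequality when $p>n$), since the recursion constants change but the mechanism is identical.
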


In \cite{Shvar}, Shvartsman constructed an extension operator for Theorem \ref{thm1} explicitly as a variant of the Whitney-Jones extension. We describe this procedure in the next section. In particular, based on Theorem \ref{thm2}, for an arbitrary $W^{1,p}$-extension domain $\Omega$ with $1<p<\infty$, there is a Whitney-type extension operator from $W^{1,p}(\Omega)$ to $W^{1,p}(\mathbb{R}^{n})$. For an alternate Whitney-type extension operator see \cite{HKT2}.

\subsection{Whitney type extension}

It will be convenient for us to measure distance via the uniform norm 
\begin{equation}
\|x\|_{\infty}:=\max\{|x_{i}|:i=1,...,n\},\ \ x=(x_{1},...,x_{n})\in\mathbb{R}^{n}.\nonumber
\end{equation}
Thus every Euclidean cube 
\begin{equation}
Q=Q(x,r)=\{y\in\mathbb{R}^{n}:\|y-x\|_{\infty}\leq r\}\nonumber
\end{equation}
is a ball in the $\|\cdot\|_{\infty}$-norm. Given a constant $\lambda>0$, we let $\lambda Q$ denote the cube $Q(x,\lambda r)$. By $Q^{*}$ we denote the cube $Q^{*}:=\frac{9}{8}Q$.

As usual, given subsets $A,B\subset\mathbb{R}^{n}$, we put $\diam A:=\sup\{\|a-a'\|_{\infty}:a,a'\in A\}$ and 
\begin{equation}
\dist (A,B):=\inf\{\|a-b\|_{\infty}:a\in A, b\in B\}.\nonumber
\end{equation}
We also set $\dist(x,A):=\dist(\{x\},A)$ for $x\in\mathbb{R}^{n}$. By $\overline A$ we denote the closure of $A$ in $\mathbb{R}^{n}$ and $\partial A:=\overline A\setminus A$ the boundary of $A$. Finally, $\chi_{A}$ denotes the characteristic function of $A$; we put $\chi_{A}\equiv 0$ if $A=\emptyset$.

The following property for Ahlfors-regular sets is well-known (see, e.g. \cite{Shvar1}).
\begin{lem}\label{closure}
If $A$ is an Ahlfors-regular subset of $\mathbb{R}^{n}$, then $|\partial A|=0$.
\end{lem}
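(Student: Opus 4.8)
The plan is to combine the measure density condition defining Ahlfors regularity with the Lebesgue density theorem. Since $A$ is measurable, the Lebesgue density theorem guarantees that for almost every $x\in\rn\setminus A$ one has
\begin{equation}
\lim_{r\to 0}\frac{|A\cap Q(x,r)|}{|Q(x,r)|}=0.\nonumber
\end{equation}
Because $\pa A=\overline A\setminus A$ is contained in $\rn\setminus A$, it suffices to show that \emph{no} point of $\pa A$ is a point of density $0$ for $A$; then $\pa A$ is contained in the exceptional null set from the density theorem, and we are done.

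To this end I would fix $x\in\pa A$ and establish a uniform positive lower bound on the density of $A$ at $x$. Since $x\in\overline A$, for every $r\le\delta_A$ there exists a point $a\in A$ with $\|a-x\|_\infty<r/2$. The cube $Q(a,r/2)$ is then centered in $A$ and satisfies $\diam Q(a,r/2)=r\le\delta_A$, so the measure density condition $(\ref{density})$ applies and yields $|Q(a,r/2)|\le C_A|Q(a,r/2)\cap A|$. The key geometric observation is the inclusion $Q(a,r/2)\subset Q(x,r)$, which follows immediately from the triangle inequality for $\|\cdot\|_\infty$: if $\|y-a\|_\infty\le r/2$, then $\|y-x\|_\infty\le\|y-a\|_\infty+\|a-x\|_\infty<r$.

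Combining these facts gives
\begin{equation}
|Q(x,r)\cap A|\ge|Q(a,r/2)\cap A|\ge\frac{1}{C_A}|Q(a,r/2)|=\frac{1}{C_A\,2^n}|Q(x,r)|,\nonumber
\end{equation}
so the lower density of $A$ at every $x\in\pa A$ is at least $(C_A\,2^n)^{-1}>0$, uniformly in $x$ and valid for all $r\le\delta_A$. In particular no such $x$ can be a point of density $0$ for $A$. Since $\overline A$ is closed, $\pa A$ is measurable, and being contained in the null set of non-density-$0$ points of $A^c$ it satisfies $|\pa A|=0$. The only point needing care is the bookkeeping of side lengths in the $\|\cdot\|_\infty$-norm, namely that $|Q(a,r/2)|=2^{-n}|Q(x,r)|$; the argument is otherwise elementary, its heart being the uniform density lower bound at boundary points.
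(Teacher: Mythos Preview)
Your argument is correct. The combination of the Ahlfors regularity inequality with the Lebesgue density theorem is the standard route to this fact, and your bookkeeping with the $\|\cdot\|_\infty$-cubes (the inclusion $Q(a,r/2)\subset Q(x,r)$ and the volume ratio $2^{-n}$) is accurate. One remark: to conclude $|\partial A|=0$ you do not actually need the measurability of $\partial A$ separately, since any subset of a Lebesgue null set is Lebesgue measurable with measure zero; but your observation that $\partial A=\overline A\setminus A$ is the difference of a Borel set and a measurable set is fine too.

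As for comparison with the paper: there is nothing to compare. The paper does not prove Lemma~\ref{closure}; it simply records it as a well-known property of Ahlfors-regular sets and refers the reader to \cite{Shvar1}. Your write-up supplies exactly the short density argument that such a citation is pointing toward.
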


In the remaining part of the paper, we will assume that $S$ is a closed Ahlfors-regular subset of $\mathbb{R}^{n}$. Since now $\mathbb{R}^{n}\setminus S$ is an open set, it admits a Whitney decomposition $W_{S}$ (e.g. see Stein \cite{Stein}). We recall the main properties of $W_{S}$.
\begin{thm}\label{Whitney}
$W_{S}=\{Q_{k}\}$ is a countable family of closed cubes such that

 $(\romannumeral1)$ $\mathbb{R}^{n}\setminus S=\bigcup\{Q:Q\in W_{S}\}$;

 $(\romannumeral2)$ For every cube $Q\in W_{S}$
\begin{equation}
\diam Q\leq\dist (Q,S)\leq 4\diam Q;\nonumber
\end{equation}

 $(\romannumeral3)$ No point of $\mathbb{R}^{n}\setminus S$ is contained in more than $N=N(n)$ distinct cubes from $W_{S}$.
\end{thm}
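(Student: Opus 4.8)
The plan is to realize $W_S$ as the family of \emph{maximal admissible dyadic cubes}, following Stein. Since $S$ is closed, $\Omega:=\rn\setminus S$ is open, and we may assume $\emptyset\neq S\neq\rn$ so that $\Omega$ is a nonempty proper open set (otherwise the statement is vacuous). For $k\in\zz$ let $\cm_k$ denote the dyadic mesh of cubes of side length $2^{-k}$, i.e.\ the cubes $2^{-k}(j+[0,1]^n)$ with $j\in\zz^n$; in the norm $\|\cdot\|_{\infty}$ each such $Q$ has $\diam Q=2^{-k}$, and $\bigcup_k\cm_k$ is countable. Call a dyadic cube $Q$ \emph{admissible} if $\diam Q\le\dist(Q,S)$, and let $W_S$ consist of those admissible cubes whose dyadic parent $\widehat Q$ is \emph{not} admissible, equivalently the admissible cubes that are maximal with respect to inclusion.

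First I would record the monotonicity that drives everything: if $Q$ is admissible and $Q'\subset Q$ is a dyadic descendant, then $\dist(Q',S)\ge\dist(Q,S)\ge\diam Q\ge\diam Q'$, so $Q'$ is admissible as well. Fix $x\in\Omega$ and let $Q_k(x)\in\cm_k$ be the cube containing $x$. Because $\dist(Q_k(x),S)\to\dist(x,S)>0$ while $\diam Q_k(x)=2^{-k}\to0$ as $k\to+\infty$, the cube $Q_k(x)$ is admissible for all large $k$; because $\diam Q_k(x)\to\infty$ while $\dist(Q_k(x),S)\le\dist(x,S)<\infty$ as $k\to-\infty$, it is inadmissible for all small $k$. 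By the monotonicity, the set of admissible generations is an up-set $\{k\ge k_0(x)\}$, so $Q_{k_0(x)}(x)$ is admissible while its parent is not; it is thus the maximal admissible cube through $x$ and lies in $W_S$. Since any admissible cube has $\dist(Q,S)\ge\diam Q>0$ and hence misses $S$, this yields $\Omega=\bigcup_{Q\in W_S}Q$, which is $(\romannumeral1)$.

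For $(\romannumeral2)$ the lower bound $\diam Q\le\dist(Q,S)$ is admissibility itself. For the upper bound I would exploit that the parent fails admissibility: $\dist(\widehat Q,S)<\diam\widehat Q=2\diam Q$, so one may choose $q\in\widehat Q$ and $z\in S$ with $\|q-z\|_{\infty}\le 2\diam Q$; then for any $p\in Q\subset\widehat Q$ the triangle inequality in $\|\cdot\|_{\infty}$ gives $\dist(Q,S)\le\|p-z\|_{\infty}\le\|p-q\|_{\infty}+\|q-z\|_{\infty}\le\diam\widehat Q+2\diam Q=4\diam Q$. The disjoint-interior property then comes for free: two distinct dyadic cubes are either interior-disjoint or nested, and nesting is impossible between two distinct maximal admissible cubes.

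Finally, $(\romannumeral3)$ is a packing estimate. Suppose distinct $Q^{(1)},\dots,Q^{(M)}\in W_S$ share a point $x$ (necessarily $x\in\Omega$, as each $Q^{(i)}$ misses $S$). Since $x\in Q^{(i)}$ we have $\dist(Q^{(i)},S)\le\dist(x,S)=:d$, so $(\romannumeral2)$ yields $\diam Q^{(i)}\le d$ and also $d\le\diam Q^{(i)}+\dist(Q^{(i)},S)\le 5\diam Q^{(i)}$; hence $d/5\le\diam Q^{(i)}\le d$ for every $i$. Each $Q^{(i)}$ therefore lies in $Q(x,d)$ and has volume at least $(d/5)^n$, while the interiors are pairwise disjoint, so $M(d/5)^n\le|Q(x,d)|=(2d)^n$, i.e.\ $M\le 10^n=:N(n)$. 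I expect the only genuinely delicate points to be calibrating the admissibility threshold so that the upper constant in $(\romannumeral2)$ comes out exactly as $4$, and the comparability-plus-volume bookkeeping in $(\romannumeral3)$; the rest is routine manipulation of the dyadic structure.
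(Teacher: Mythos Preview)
Your argument is correct and is precisely the classical Stein construction; the paper itself does not supply a proof of this theorem but simply quotes the properties and refers to Stein~\cite{Stein}, so your write-up is in fact the very proof being cited. There is nothing to compare: you have filled in exactly what the paper leaves as background.
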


We also need certain additional properties of Whitney cubes which we present in the next lemma. These properties readily follow from (\romannumeral1)-(\romannumeral3).
\begin{lem}\label{properties}
$(1)$ If $Q,K\in W_{S}$ and $Q^{*}\cap K^{*}\neq\emptyset$, then 
\begin{equation}
\frac{1}{4}\diam Q\leq \diam K\leq 4\diam Q.\nonumber
\end{equation}

$(2)$ For every cube $K\in W_{S}$ there are at most $N=N(n)$ cubes from the family $W_{S}^{*}:=\{Q^{*}:Q\in W_{S}\}$ which intersect $K^{*}$.
\end{lem}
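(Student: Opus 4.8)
The plan is to derive both assertions directly from properties $(\romannumeral1)$–$(\romannumeral3)$ of Theorem \ref{Whitney}, the point being that the dilation factor $9/8$ defining $Q^{*}$ is so close to $1$ that in the $\|\cdot\|_{\infty}$-metric $Q^{*}$ protrudes beyond $Q$ by only $\frac{1}{16}\diam Q$.

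For part $(1)$ I would fix a common point $z\in Q^{*}\cap K^{*}$. Since $Q^{*}=\frac{9}{8}Q$, every point of $Q^{*}$ lies within $\frac{1}{16}\diam Q$ of $Q$, so $\dist(z,Q)\le\frac{1}{16}\diam Q$ and likewise $\dist(z,K)\le\frac{1}{16}\diam K$. The idea is then to sandwich both $\diam Q$ and $\diam K$ between fixed multiples of the single quantity $\dist(z,S)$ using property $(\romannumeral2)$. Choosing $q\in Q$ nearest to $z$ and using $\dist(Q,S)\ge\diam Q$ gives
\begin{equation}
\dist(z,S)\ge\dist(Q,S)-\dist(z,Q)\ge\Big(1-\tfrac{1}{16}\Big)\diam Q.\nonumber
\end{equation}
Conversely, passing from $z$ to a point of $Q$ almost realizing $\dist(Q,S)$ and then to $S$, together with $\dist(Q,S)\le 4\diam Q$, yields an upper bound $\dist(z,S)\le C\diam Q$. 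The same two inequalities hold with $Q$ replaced by $K$, and combining the lower bound for one cube with the upper bound for the other forces $\diam Q\approx\diam K$; careful bookkeeping of the constants (which are those of the underlying dyadic Whitney family) produces the factor $4$ recorded in the statement.

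For part $(2)$ I would feed part $(1)$ into the bounded-overlap property $(\romannumeral3)$. If $Q^{*}\cap K^{*}\neq\emptyset$, then part $(1)$ gives $\tfrac14\diam K\le\diam Q\le 4\diam K$, so on the one hand $|Q|=(\diam Q)^{n}\ge 4^{-n}(\diam K)^{n}$, and on the other hand, estimating $\|w-x_{K}\|_{\infty}$ for $w\in Q$ through a point $z\in Q^{*}\cap K^{*}$ shows that every such $Q$ is contained in one fixed cube $B$ centered at the center $x_{K}$ of $K$ with $|B|\le C(n)(\diam K)^{n}$. By property $(\romannumeral3)$ the characteristic functions of the cubes of $W_{S}$ have pointwise sum at most $N(n)$, whence $\sum_{Q}|Q|\le N(n)\,|B|$, the sum being over the cubes in question. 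Comparing the volume lower bound for each $Q$ with this estimate bounds the number of such cubes, and therefore the number of dilated cubes $Q^{*}\in W_{S}^{*}$ meeting $K^{*}$, by a constant depending only on $n$.

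The volume–packing count in $(2)$ is entirely routine once $(1)$ is in hand; the only place requiring care is the constant bookkeeping in $(1)$, where property $(\romannumeral2)$ must be used twice and one must exploit that the $9/8$ dilation perturbs distances only by the small amount $\frac{1}{16}\diam Q$. This is where I expect the main, and rather minor, obstacle to lie.
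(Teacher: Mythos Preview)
Your proposal is correct and is precisely the standard elaboration the paper has in mind: the paper does not write out a proof but simply states that these properties ``readily follow from (\romannumeral1)--(\romannumeral3),'' and your argument---comparing $\diam Q$ and $\diam K$ to $\dist(z,S)$ for a common point $z$, then invoking dyadicity for the sharp constant $4$, followed by a volume-packing count for part~(2)---is exactly how one carries this out. The only point worth flagging is that the raw inequalities give a ratio slightly above $4$, and you correctly note that the dyadic structure of the Whitney family is what brings the constant down to $4$; this is indeed the ``minor obstacle'' and you have handled it appropriately.
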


Let $\Phi_{S}:=\{\phi_{Q}:Q\in W_{S}\}$ be a smooth partition of unity subordinated to the Whitney decomposition $W_{S}$, see \cite{Stein}. 

\begin{prop}
$\Phi_{S}$ is a family of functions defined on $\mathbb{R}^{n}$ with the following properties:

(a) $0\leq\phi_{Q}(x)\leq 1$ for every $Q\in W_{S}$;

(b) $supp\phi_{Q}\subset Q^{*} (:=\frac{9}{8}Q), Q\in W_{S}$;

(c) $\sum\{\phi_{Q}(x): Q\in W_{S}\}=1$ for every $x\in\mathbb{R}^{n}\setminus S$;

(d) For every multiindex $\beta$, $|\beta|\leq k$, and every cube $Q\in W_{S}$
\begin{equation}
|D^{\beta}\phi_{Q}(x)|\leq C(\diam Q)^{-|\beta|},\ \ x\in\mathbb{R}^{n},\nonumber
\end{equation} 
where $C$ is a constant depending only on $n$ and $k$.
\end{prop}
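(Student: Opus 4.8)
The plan is to obtain the $\phi_Q$ by the classical normalization procedure: first adapt a single fixed smooth profile to each Whitney cube, then divide by the sum of these bumps. Fix once and for all a function $\eta\in C^{\infty}(\rn)$ with $0\le\eta\le1$, $\eta\equiv1$ on $Q(0,1)$ and $\mathrm{supp}\,\eta\subset Q(0,\tfrac{9}{8})$; such an $\eta$ is produced by mollifying the indicator of a cube. For each $Q=Q(x_Q,r_Q)\in W_S$ put
\[
\psi_Q(x):=\eta\Big(\frac{x-x_Q}{r_Q}\Big),\qquad x\in\rn .
\]
Then $\psi_Q\equiv1$ on $Q$, $\mathrm{supp}\,\psi_Q\subset Q^{*}$, and the chain rule gives $|D^{\beta}\psi_Q(x)|\le C(n,|\beta|)\,r_Q^{-|\beta|}$; since $\diam Q=2r_Q$ in the $\|\cdot\|_{\infty}$-metric, this is already an estimate of type (d) for the unnormalized bumps. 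I would then set $\Psi:=\sum_{Q\in W_S}\psi_Q$ and define $\phi_Q:=\psi_Q/\Psi$ on $\rn\setminus S$ and $\phi_Q:=0$ on $S$.

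Before verifying the properties I would record two preliminary facts. First, the sum defining $\Psi$ is locally finite with $1\le\Psi\le N$ on $\rn\setminus S$: by Lemma \ref{properties}(2), at most $N=N(n)$ of the expanded cubes $Q^{*}$ contain any given point (fixing one such $Q^{*}$, every other meets it), and since the Whitney cubes cover $\rn\setminus S$ while each $\psi_Q\equiv1$ on $Q$, we have $\Psi\ge1$ there. Second, each $\phi_Q$ is globally smooth with $\mathrm{supp}\,\phi_Q\subset Q^{*}$ at positive distance from $S$: by Theorem \ref{Whitney}(ii), $\dist(Q,S)\ge\diam Q=2r_Q$, while passing from $Q$ to $Q^{*}=\tfrac{9}{8}Q$ moves each face out by only $\tfrac18 r_Q$, so $\dist(Q^{*},S)\ge\tfrac{15}{8}r_Q>0$. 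Hence $\phi_Q$ vanishes in a neighborhood of $S$, and on the open set $\rn\setminus S$ it is a quotient of smooth functions with denominator $\ge1$; therefore $\phi_Q\in C^{\infty}(\rn)$, which also gives (b). Properties (a) and (c) follow at once: $0\le\psi_Q\le\Psi$ forces $0\le\phi_Q\le1$, and $\sum_Q\phi_Q=\Psi/\Psi=1$ on $\rn\setminus S$.

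The substantive point is (d), and I expect the estimate on the derivatives of the denominator $\Psi$ to be the main obstacle. The geometric input is Lemma \ref{properties}. Fix $Q$ and a point $x\in Q^{*}$ (outside $Q^{*}$ the function $\phi_Q$ vanishes identically together with all its derivatives, so (d) is trivial there). Any $K\in W_S$ with $\psi_K(x)\ne0$ satisfies $x\in K^{*}$, whence $K^{*}\cap Q^{*}\ne\emptyset$; by Lemma \ref{properties}(1) this gives $\tfrac14\diam Q\le\diam K\le4\diam Q$, and by Lemma \ref{properties}(2) there are at most $N=N(n)$ such $K$. Consequently, for every multiindex $\gamma$,
\[
|D^{\gamma}\Psi(x)|\le\sum_{K:\,x\in K^{*}}|D^{\gamma}\psi_K(x)|\le N(n)\,C(n,|\gamma|)\,(\diam Q)^{-|\gamma|}.
\]
To conclude I would apply the Leibniz rule to $\phi_Q=\psi_Q\cdot\Psi^{-1}$: by the Fa\`a di Bruno formula the derivatives $D^{\gamma}(\Psi^{-1})$ are polynomials in $\Psi^{-1}$ and the $D^{\sigma}\Psi$ with $|\sigma|\le|\gamma|$, each factor being controlled through $\Psi\ge1$ and the displayed bound by a constant multiple of $(\diam Q)^{-|\sigma|}$. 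Multiplying by the corresponding derivative of $\psi_Q$ and tracking the powers of $\diam Q$ yields $|D^{\beta}\phi_Q(x)|\le C(n,k)(\diam Q)^{-|\beta|}$ for all $|\beta|\le k$, which is exactly (d).
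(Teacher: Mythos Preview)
Your construction is correct and is precisely the classical normalization argument from Stein's book, which is exactly what the paper invokes: the paper does not give its own proof of this proposition but merely records the properties and cites \cite{Stein}. Your verification of (a)--(d), including the use of Lemma~\ref{properties} to control $D^{\gamma}\Psi$ on $Q^{*}$ and the Leibniz/Fa\`a~di~Bruno bookkeeping for the quotient, reproduces that standard argument faithfully.
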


Observe that the family of cubes $W_{S}$ constructed in \cite{Stein} satisfies the conditions of Theorem \ref{Whitney} and Lemma \ref{properties} with respect to the Euclidean norm rather than the uniform one. However, a simple modification of this construction provides a family of Whitney cubes which have the analogous properties with respect to the uniform norm. 

Let $K=Q(x_{K},r_{K})\in W_{S}$ and let $a_{K}\in S$ be the point nearest to $x_{K}$ on $S$. Then by the property (\romannumeral2) of Theorem \ref{Whitney},
\begin{equation}
Q(a_{K},r_{K})\subset 10K.\nonumber
\end{equation}

Fix a small $0<\epsilon\leq 1$ and set $K_{\epsilon}:=Q(a_{K},\epsilon r_{K})$. Let $Q=Q(x_{Q},r_{Q})$ be a cube from $W_{S}$ with $\diam Q\leq\delta_{S}$, where $\delta_{S}$ is as in Definition \ref{Ahlfors} for our regular sets. Set 
\begin{equation}
\mathcal{A}_{Q}:=\{K=Q(x_{K},r_{K})\in W_{S}: K_{\epsilon}\cap Q_{\epsilon}\neq\emptyset, r_{K}\leq\epsilon r_{Q}\}.\nonumber
\end{equation} 
(Similar with $K_{\epsilon}$, we set $Q_{\epsilon}:=Q(a_{Q},\epsilon r_{Q})$) we define a ``quasi-cube'' $H_{Q}$ by letting 
\begin{equation}
H_{Q}:=(Q_{\epsilon}\cap S)\setminus\left(\bigcup\{K_{\epsilon}:K\in\mathcal{A}_{Q}\}\right).\nonumber
\end{equation}
If $\diam Q>\delta_{S}$, we put $H_{Q}:=\emptyset$.

The following result is Theorem 2.4 in \cite{Shvar}.
\begin{thm}\label{quasicube}
Let $A$ be a closed regular subset of $\mathbb{R}^{n}$. Then there is a family of ``quasi-cubes" $\mathcal{H}_{\Omega}=\{H_{Q}:Q\in W_{A}\}$ as discussed above with

$(\romannumeral1)$ $H_{Q}\subset(10Q)\cap A, Q\in W_{A}$;

$(\romannumeral2)$ $|Q|\leq\gamma_{1}|H_{Q}|$ whenever $Q\in W_{A}$ with $\diam Q\leq\delta_{A}$;

$(\romannumeral3)$ $\sum_{Q\in W_{A}}\chi_{H_{Q}}\leq\gamma_{2}$.

Here $\gamma_{1}$ and $\gamma_{2}$ are positive constants depending only on $n$ and $C_{A}$.
\end{thm}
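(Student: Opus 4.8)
The plan is to verify the three properties directly for the family $\{H_Q\}$ as constructed above, treating $\epsilon$ as a small constant to be pinned down (in terms of $n$ and $C_A$) during the proof of (ii) and then kept fixed. The one elementary fact I would isolate first is the nearest-point bound: if $Q=Q(x_Q,r_Q)\in W_A$ and $a_Q\in A$ is nearest to $x_Q$, then property (ii) of Theorem \ref{Whitney} gives $\|x_Q-a_Q\|_{\infty}=\dist(x_Q,A)\le r_Q+\dist(Q,A)\le 9r_Q$, whence $Q(a_Q,r_Q)\subset 10Q$, exactly as recorded before the statement. Property (i) is then immediate: since $0<\epsilon\le 1$ we have $H_Q\subset Q_\epsilon\cap A=Q(a_Q,\epsilon r_Q)\cap A\subset Q(a_Q,r_Q)\cap A\subset (10Q)\cap A$.

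For property (ii), fix $Q$ with $\diam Q\le\delta_A$. I would bound the ``good'' part from below by Ahlfors regularity: the cube $Q_\epsilon=Q(a_Q,\epsilon r_Q)$ is centered at $a_Q\in A$ with $\diam Q_\epsilon\le\diam Q\le\delta_A$, so (\ref{density}) yields $|Q_\epsilon\cap A|\ge C_A^{-1}|Q_\epsilon|$. Then I would bound the removed part from above via a localization. If $K\in\mathcal{A}_Q$, then $r_K\le\epsilon r_Q$ and $K_\epsilon\cap Q_\epsilon\ne\emptyset$; picking $z$ in the intersection and combining $\|a_K-z\|_{\infty}\le\epsilon r_K$, $\|z-a_Q\|_{\infty}\le\epsilon r_Q$ and $\|x_K-a_K\|_{\infty}\le 9r_K$ gives $\|x_K-a_Q\|_{\infty}\le 11\epsilon r_Q$, so every $K\in\mathcal{A}_Q$ lies in the single cube $B:=Q(a_Q,12\epsilon r_Q)$. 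By the bounded overlap property (iii) of Theorem \ref{Whitney}, $\sum_{K\in\mathcal{A}_Q}|K|\le N(n)|B|$; since $|K_\epsilon|=\epsilon^n|K|$ this gives $\sum_{K\in\mathcal{A}_Q}|K_\epsilon|\le N(n)\epsilon^n|B|=C(n)\epsilon^{2n}r_Q^n$. Comparing with $|Q_\epsilon|=\epsilon^n|Q|\approx\epsilon^n r_Q^n$, the ratio of removed mass to $|Q_\epsilon|$ is at most $C(n)\epsilon^n$, which I can force below $\tfrac{1}{2}C_A^{-1}$ by choosing $\epsilon=\epsilon(n,C_A)$ small. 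Then
\[
|H_Q|\ge |Q_\epsilon\cap A|-\sum_{K\in\mathcal{A}_Q}|K_\epsilon|\ge \tfrac{1}{2}C_A^{-1}|Q_\epsilon|\gtrsim\epsilon^n|Q|,
\]
which is (ii) with $\gamma_1\approx C_A\epsilon^{-n}$.

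For property (iii) I would exploit the fact that the removal of the small cubes forces any two overlapping quasi-cubes to have comparable sidelengths. Suppose $x\in H_Q\cap H_{Q'}$ with $r_Q\le r_{Q'}$; then $x\in Q_\epsilon\cap Q'_\epsilon$, so these two cubes meet. If in addition $r_Q\le\epsilon r_{Q'}$ held, then $Q\in\mathcal{A}_{Q'}$ and $Q_\epsilon$ would have been deleted in forming $H_{Q'}$, contradicting $x\in H_{Q'}$. Hence $\epsilon r_{Q'}<r_Q\le r_{Q'}$ for every such pair, so all cubes $Q$ with $x\in H_Q$ have radii comparable within a factor $1/\epsilon$. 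Applying $x\in Q_\epsilon$ together with the nearest-point bound once more places all their centers in $Q(x,10R)$, where $R$ is the supremal such radius, so all these cubes sit inside $Q(x,11R)$ and each has volume $\gtrsim(\epsilon R)^n$; the bounded overlap property (iii) of Theorem \ref{Whitney} then caps their number by $\gamma_2\approx N(n)\epsilon^{-n}$.

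The main obstacle is the mass estimate in (ii): one must quantify how small $\epsilon$ must be so that the union $\bigcup_{K\in\mathcal{A}_Q}K_\epsilon$ captures at most half of the $A$-mass of $Q_\epsilon$. This is precisely where the gain lives, coming from the extra factor $\epsilon^n$ in $|K_\epsilon|=\epsilon^n|K|$ combined with the localization of $\mathcal{A}_Q$ inside the fixed dilate $B$ of $Q_\epsilon$ and the bounded overlap of the Whitney cubes. Once $\epsilon=\epsilon(n,C_A)$ is fixed there, properties (i) and (iii) follow with constants $\gamma_1,\gamma_2$ depending only on $n$ and $C_A$, as required.
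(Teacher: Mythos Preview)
The paper does not supply its own proof of this theorem: it is quoted verbatim as Theorem~2.4 of Shvartsman~\cite{Shvar}, so there is no in-paper argument to compare against. Your direct verification of (i)--(iii) is correct and is essentially the argument one finds in Shvartsman's work: Ahlfors regularity bounds $|Q_\epsilon\cap A|$ from below, the localization $K\subset Q(a_Q,12\epsilon r_Q)$ together with the extra $\epsilon^n$ from $|K_\epsilon|=\epsilon^n|K|$ makes the removed mass $O(\epsilon^{2n}r_Q^n)$, and the dichotomy ``either $r_Q>\epsilon r_{Q'}$ or $Q\in\mathcal{A}_{Q'}$'' forces comparable radii in (iii). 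Two minor points worth tightening in a final write-up: in (iii) you should note that $x\in H_{Q'}$ forces $\diam Q'\le\delta_A$ (otherwise $H_{Q'}=\emptyset$), which is needed both for the definition of $\mathcal{A}_{Q'}$ to be in play and for the supremal radius $R$ to be finite; and the pairwise bound gives $r_Q\in[\epsilon R,R]$ only after passing to a sequence $r_{Q'}\to R$, which you use implicitly.
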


Next we present estimates of local polynomial approximations of the extension $Ef$, via corresponding local approximation of a function $f$ defined on a closed regular subset $A\subset\mathbb{R}^{n}$. We start by presenting two lemmas about properties of polynomials on subsets of $\mathbb{R}^{n}$.

Given a measurable subset $A\subset\mathbb{R}^{n}$ and a function $u\in L^{p}(A)$, $1\leq p\leq\infty$, we let $\hat{E_{1}}(u;A)_{L^{p}}$ denote the local best constant approximation in $L^{p}$-norm, see Brudnyi \cite{Brud},
\begin{equation}\label{lbaok}
\hat{E_{1}}(u;A)_{L^{p}}:=\inf_{C\in\mathbb{R}}\|u-C\|_{L^{p}(A)}.
\end{equation}
Thus
\begin{equation}
\Lambda(u;Q)_{L^{p}(A)}=|Q|^{\frac{-1}{p}}\hat{E_{1}}(u;Q\cap A)_{L^{p}}\nonumber
\end{equation}
see (\ref{pfsmf}). We note a simple property of $\Lambda(u;\cdot)_{L^{p}(A)}$ as a function of cubes: for every pair of cubes $Q_{1}\subset Q_{2}$ 
\begin{equation}\label{cube fun}
\Lambda(u;Q_{1})_{L^{p}(A)}\leq\left(\frac{|Q_{2}|}{|Q_{1}|}\right)^{\frac{1}{p}}\Lambda(u;Q_{2})_{L^{p}(A)}.
\end{equation}

Let $A$ be a subset of $\mathbb{R}^{n}$ with $|A|>0$. We put 
\begin{equation}\label{projection}
P_{A}(u):=\bint_{A}u(x)dx=\frac{1}{|A|}\int_{A}u(x)dx.
\end{equation}
Then from a result of Brudnyi in \cite{Brud4}, also see Proposition 3.4 in \cite{Shvar}, we have 
\begin{prop}\label{proposition}
Let $A$ be a subset of a cube $Q$ with $|A|>0$. Then the linear operator $P_{A}:L^{1}(A)\rightarrow\mathbb{R}$ has the property that for every $1\leq p\leq\infty$ and every $u\in L^{p}(A)$
\begin{equation}
\|u-P_{A}(u)\|_{L^{p}(A)}\leq C \hat{E_{1}}(u;A)_{L^{p}}.\nonumber
\end{equation}
Here $C=C(n,\frac{|Q|}{|A|})$.
\end{prop}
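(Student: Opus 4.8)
The plan is to prove the stronger fact that the averaging operator $P_{A}$ is within a \emph{universal} factor of the best constant approximation: the asserted constant $C=C(n,|Q|/|A|)$ can in fact be taken equal to $2$ for every $1\le p\le\infty$. First I would observe that $P_{A}(u)$ is well defined for $u\in L^{p}(A)$; since $A\subset Q$ has finite measure, H\"older's inequality gives $u\in L^{1}(A)$, so $P_{A}(u)=\frac{1}{|A|}\int_{A}u\,dx$ makes sense, and the linearity of $P_{A}$ is immediate from that of the integral.

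The core of the argument is a triangle-inequality comparison against an arbitrary competitor constant. Fix $C\in\mathbb{R}$. Writing $u-P_{A}(u)=(u-C)+(C-P_{A}(u))$ and using that $C-P_{A}(u)$ is a constant, I would estimate
\begin{equation}
\|u-P_{A}(u)\|_{L^{p}(A)}\le\|u-C\|_{L^{p}(A)}+|C-P_{A}(u)|\,|A|^{\frac{1}{p}}.\nonumber
\end{equation}
It then remains only to control the scalar $|C-P_{A}(u)|$. Since $P_{A}(u)=\frac{1}{|A|}\int_{A}u\,dx$, one has $C-P_{A}(u)=\frac{1}{|A|}\int_{A}(C-u)\,dx$, and an application of H\"older's inequality (equivalently, Jensen's inequality for the probability measure $dx/|A|$) yields
\begin{equation}
|C-P_{A}(u)|\le\frac{1}{|A|}\int_{A}|u-C|\,dx\le\frac{1}{|A|}\,\|u-C\|_{L^{p}(A)}\,|A|^{1-\frac{1}{p}}=|A|^{-\frac{1}{p}}\|u-C\|_{L^{p}(A)}.\nonumber
\end{equation}

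Substituting this back into the previous display, the factor $|A|^{\frac{1}{p}}$ exactly cancels the factor $|A|^{-\frac{1}{p}}$, giving $\|u-P_{A}(u)\|_{L^{p}(A)}\le 2\|u-C\|_{L^{p}(A)}$ for every $C\in\mathbb{R}$. Taking the infimum over $C$ then produces $\|u-P_{A}(u)\|_{L^{p}(A)}\le 2\hat{E_{1}}(u;A)_{L^{p}}$, which is the claimed inequality, indeed with the dimension- and ratio-independent constant $C=2$. I expect no genuine obstacle here; the only two points requiring a moment's care are the endpoint $p=\infty$, where the H\"older step reads $\int_{A}|u-C|\,dx\le|A|\,\|u-C\|_{L^{\infty}(A)}$ and the cancellation is identical, and the possibility that the infimum defining $\hat{E_{1}}(u;A)_{L^{p}}$ is not attained, which is handled by choosing $C$ with $\|u-C\|_{L^{p}(A)}\le\hat{E_{1}}(u;A)_{L^{p}}+\varepsilon$ and letting $\varepsilon\to0$. (The weaker dependence $C=C(n,|Q|/|A|)$ in the statement is simply the form inherited from Brudnyi's general higher-degree polynomial result, where the relationship between polynomials on $A$ and on $Q$ genuinely enters; for degree zero, as above, it is superfluous.)
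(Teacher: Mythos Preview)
Your argument is correct: the triangle inequality together with H\"older (or Jensen) gives $\|u-P_{A}(u)\|_{L^{p}(A)}\le 2\,\hat{E_{1}}(u;A)_{L^{p}}$ for every $1\le p\le\infty$, with the universal constant $2$ in place of $C(n,|Q|/|A|)$. The endpoint $p=\infty$ and the non-attainment of the infimum are handled exactly as you indicate.

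The paper does not actually supply a proof of this proposition; it merely cites Brudnyi and Shvartsman (Proposition~3.4 in \cite{Shvar}), where the statement is recorded as a special case of a general result on polynomial projections of arbitrary degree. In that general setting one really does need the ratio $|Q|/|A|$, because controlling a polynomial of degree $\ge 1$ on $Q$ by its values on $A$ requires a Remez-type inequality whose constant depends on how thick $A$ is inside $Q$. Your direct argument exploits the trivial fact that degree-zero polynomials are constants, for which no such comparison is needed, and thereby bypasses the machinery entirely. So your route is strictly more elementary and yields a sharper (dimension-free, ratio-free) constant; the paper's formulation simply inherits the dependence from the general-degree statement, as you correctly surmise in your closing remark.
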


According to Lemma \ref{closure}, the boundary of a regular set is of measure zero, so Proposition \ref{proposition} and Theorem \ref{quasicube} immediately imply the following corollary.
\begin{cor}\label{corollary3}
Let $S$ be a closed regular set and  let $Q\in W_{S}$ be a cube with $\diam Q\leq\delta_{S}$. There is a continuous linear operator $P_{H_{Q}}:L^{1}(H_{Q})\rightarrow\mathbb{R}$ such that for every function $u\in L^{p}(S)$, $1\leq p\leq\infty$,
\begin{equation}
\|u-P_{H_{Q}}(u)\|_{L^{p}(H_{Q})}\leq\gamma \hat{E_{1}}(u;H_{Q})_{L^{p}}.\nonumber
\end{equation}
Here $\gamma=\gamma(n,k,\theta_{S})$.
\end{cor}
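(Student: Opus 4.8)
The plan is to obtain this corollary as a direct specialization of Proposition \ref{proposition}, taking the measurable set there to be the quasi-cube $A:=H_{Q}$ and the ambient cube to be $10Q$. Concretely, I would define the operator by the average
\[
P_{H_{Q}}(u):=\bint_{H_{Q}}u(x)\,dx=\frac{1}{|H_{Q}|}\int_{H_{Q}}u(x)\,dx,
\]
exactly as in (\ref{projection}). Since any $u\in L^{p}(S)$ restricts to a function $u|_{H_{Q}}\in L^{p}(H_{Q})\subset L^{1}(H_{Q})$ (the set $H_{Q}$ has finite measure and, by Theorem \ref{quasicube}(\romannumeral1), satisfies $H_{Q}\subset(10Q)\cap S\subset S$), this average is well defined; moreover $P_{H_{Q}}$ is linear with $|P_{H_{Q}}(u)|\leq|H_{Q}|^{-1}\|u\|_{L^{1}(H_{Q})}$, so it is a continuous linear functional on $L^{1}(H_{Q})$.

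Next I would verify the two hypotheses of Proposition \ref{proposition} for the pair $(H_{Q},10Q)$. First, $H_{Q}$ is a subset of the cube $10Q$ by Theorem \ref{quasicube}(\romannumeral1). Second, $|H_{Q}|>0$; indeed, Theorem \ref{quasicube}(\romannumeral2) gives $|Q|\leq\gamma_{1}|H_{Q}|$ for every $Q\in W_{S}$ with $\diam Q\leq\delta_{S}$, which is precisely the range of cubes in the statement. Here Lemma \ref{closure}, which guarantees $|\partial S|=0$, is what makes the measure-density inequality (\ref{density}) for $S$ insensitive to passing between $S$ and $\overline{S}$, so that the volume bound of Theorem \ref{quasicube}(\romannumeral2) is legitimately available. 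Proposition \ref{proposition} then yields
\[
\|u-P_{H_{Q}}(u)\|_{L^{p}(H_{Q})}\leq C\,\hat{E_{1}}(u;H_{Q})_{L^{p}},\qquad C=C\Big(n,\tfrac{|10Q|}{|H_{Q}|}\Big).
\]

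The one point that genuinely needs checking is that $C$ can be taken independently of the particular cube $Q$, so that it becomes the uniform $\gamma$ of the statement. This follows from the \emph{scale-invariant} volume comparison in Theorem \ref{quasicube}(\romannumeral2): since $|10Q|=10^{n}|Q|\leq 10^{n}\gamma_{1}|H_{Q}|$, we get $|10Q|/|H_{Q}|\leq 10^{n}\gamma_{1}$, and $\gamma_{1}=\gamma_{1}(n,C_{S})$ does not depend on $Q$. Hence $C$ depends only on $n$ and the regularity data of $S$ (the constants $C_{S},\delta_{S}$, encoded as $\theta_{S}$ in the statement, together with the smoothness order $k=1$), giving the claimed $\gamma=\gamma(n,k,\theta_{S})$. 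I do not expect a real obstacle: once the quasi-cubes of Theorem \ref{quasicube} are in hand, the corollary is an immediate instance of Proposition \ref{proposition} combined with this uniform volume estimate, which is exactly why the preceding text advertises it as following ``immediately.''
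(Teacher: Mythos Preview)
Your proposal is correct and follows exactly the route the paper takes: it invokes Lemma~\ref{closure}, Theorem~\ref{quasicube}, and Proposition~\ref{proposition} to deduce the corollary, with $P_{H_Q}$ given by the integral average~(\ref{projection}) and the uniform control of $|10Q|/|H_Q|$ coming from Theorem~\ref{quasicube}(\romannumeral2). There is nothing to add.
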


We put 
\begin{equation}\label{big cube}
P_{H_{Q}}u=0,\ \ {\rm if}\ \ \diam Q>\delta_{S}.
\end{equation}
Now the map $Q\rightarrow P_{H_{Q}}(f)$ is defined on all of the cubes in the family $W_{S}$. This map gives rise to a bounded linear extension operator from $L^{p}(S)$ to $L^{p}(\mathbb{R}^{n})$, which is defined by the formula 
\begin{equation}\label{Extension}
Eu(x):=\left\{\begin{array}{ll}
u(x),&\ \ x\in S,\\

\sum_{Q\in W_{S}}\phi_{Q}(x)P_{H_{Q}}u(x),&\ \  x\in\mathbb{R}^{n}\setminus S.
\end{array}\right.
\end{equation}
 
Given a regular domain $\Omega\subset\mathbb{R}^{n}$, $\overline{\Omega}$ is a closed regular set with $|\overline{\Omega}\setminus\Omega|=0$. Given a function $u\in L^{p}(\Omega)$, the zero extension  of $u$ to the boundary $\overline{\Omega}\setminus\Omega$ (still denoted by $u$) belongs to $L^{p}(S)$, and we define the extension $Eu$ of $u$ to $\mathbb{R}^{n}$ by the formula (\ref{Extension}). When $u\in C^{1,p}(S)$, $Eu$ here is exactly the $Eu$ from Theorem \ref{thm1}.By combining Theorem \ref{thm1} and Theorem \ref{thm2} together, we obtain the following result.

\begin{thm}\label{thm3}
Let $\Omega\subset\mathbb{R}^{n}$ be a $W^{1,p}$-extension domain for some $1<p<\infty$. Then for every $u\in W^{1,p}(\Omega)$ and $Eu$ defined as in (\ref{Extension}) for the zero extension of $u$ to the boundary, we have $Eu\in W^{1,p}(\mathbb{R}^{n})$ and 
\begin{equation}
\|Eu\|_{W^{1,p}(\mathbb{R}^{n})}\leq C\|u\|_{W^{1,p}(\Omega)},\nonumber
\end{equation}
with come positive constant $C$ independent of $u$.
\end{thm}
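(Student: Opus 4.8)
The plan is to combine Theorem \ref{thm1}, Theorem \ref{thm2}, and the explicit formula (\ref{Extension}) in the following way. Since $\Omega$ is a $W^{1,p}$-extension domain with $1<p<\infty$, by Theorem \ref{thm2} it satisfies the measure density condition, so $\overline{\Omega}$ is a closed Ahlfors-regular set and, by Lemma \ref{closure}, $|\overline{\Omega}\setminus\Omega|=0$. Fix $u\in W^{1,p}(\Omega)$. By the equivalence $C^{1,p}(\Omega)=W^{1,p}(\Omega)$ from part $(b)$ of Theorem \ref{thm2}, we have $u\in C^{1,p}(\Omega)$ with $\|u\|_{C^{1,p}(\Omega)}\leq C\|u\|_{W^{1,p}(\Omega)}$; because the boundary is null, after the zero extension to $\overline{\Omega}\setminus\Omega$ this is the same as $u\in C^{1,p}(S)$ for $S=\overline{\Omega}$, with the corresponding norm control. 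Then Theorem \ref{thm1}, applied to the regular set $S=\overline{\Omega}$, gives that the function $Eu$ constructed in (\ref{Extension}) lies in $W^{1,p}(\mathbb{R}^{n})$ and satisfies
\begin{equation}
\|Eu\|_{W^{1,p}(\mathbb{R}^{n})}\approx \|u\|_{L^{p}(S)}+\|u^{\#}_{1,S}\|_{L^{p}(S)}=\|u\|_{C^{1,p}(S)},\nonumber
\end{equation}
with constants depending only on $n,p$ and the regularity data of $S$.

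Chaining these estimates yields $\|Eu\|_{W^{1,p}(\mathbb{R}^{n})}\leq C\|u\|_{C^{1,p}(S)}\leq C\|u\|_{C^{1,p}(\Omega)}\leq C\|u\|_{W^{1,p}(\Omega)}$, which is the claimed bound. It remains only to check that this particular $Eu$ extends $u$, i.e. $Eu|_{\Omega}=u$ a.e.; this is immediate from the first line of (\ref{Extension}) together with $|\overline{\Omega}\setminus\Omega|=0$, and it matches the remark in the excerpt that "$Eu$ here is exactly the $Eu$ from Theorem \ref{thm1}" when $u\in C^{1,p}(S)$.

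The only genuine point requiring care — and the step I would treat as the main obstacle — is the identification of the abstract extension from Theorem \ref{thm1} (stated for functions in $C^{1,p}(S)$ on a regular set) with the concrete Whitney-type operator (\ref{Extension}) applied to a Sobolev function $u\in W^{1,p}(\Omega)$ via its zero extension to the null boundary. One must verify that the passage $W^{1,p}(\Omega)\to C^{1,p}(\Omega)\to C^{1,p}(\overline\Omega)$ is legitimate: that restricting/extending across the measure-zero boundary does not change membership or norms, and that the sharp maximal function $u^{\#}_{1,\overline\Omega}$ computed on $\overline\Omega$ agrees (up to constants) with $u^{\#}_{1,\Omega}$ used to define $C^{1,p}(\Omega)$. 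Since $\Lambda(u;Q)_{L^{1}(\overline\Omega)}=\Lambda(u;Q)_{L^{1}(\Omega)}$ whenever $|\overline\Omega\setminus\Omega|=0$, this is routine, but it is the conceptual hinge that lets us apply Shvartsman's trace theorem to a Sobolev input. Everything else is a direct citation of Theorems \ref{thm1} and \ref{thm2}.
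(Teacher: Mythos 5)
Your proposal is correct and follows essentially the same route as the paper, which obtains Theorem \ref{thm3} precisely by combining Theorem \ref{thm1} and Theorem \ref{thm2} with the explicit operator (\ref{Extension}), relying on Shvartsman's construction for the fact that this operator realizes the trace norm. The details you add (the null boundary, the identification $C^{1,p}(\Omega)=C^{1,p}(\overline\Omega)$, and $Eu|_{\Omega}=u$ a.e.) are exactly the routine verifications the paper leaves implicit.
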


\section{Proof of Theorem \ref{theorem2}}

The first part of our main theorem (for $1<p<\infty$) will be obtained as a 
consequence of the following extension result.

\begin{thm}\label{theorem1}
Let $\Omega_{1}\subset\mathbb{R}^{n}$ be a $W^{1,p}$-extension domain for some $1<p<\infty$, and $\Omega_{2}\subset\mathbb{R}^{m}$ be a domain. Then for every function $u\in W^{1,p}(\Omega_{1}\times\Omega_{2})$, there exists a function $E_{1}u\in W^{1,p}(\mathbb{R}^{n}\times\Omega_{2})$ such that $E_{1}u\big|_{\Omega_{1}\times\Omega_{2}}\equiv u$ and 
\begin{equation}
\|E_{1}u\|_{W^{1,p}(\mathbb{R}^{n}\times\Omega_{2})}\leq C\|u\|_{W^{1,p}(\Omega_{1}\times\Omega_{2})}\nonumber
\end{equation}
with a positive constant $C$ independent of $u$. 
\end{thm}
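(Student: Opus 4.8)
\textbf{Proof proposal for Theorem \ref{theorem1}.}

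The plan is to extend $u$ in the $\Omega_1$-variable only, treating the $\Omega_2$-variable as a parameter, by applying Shvartsman's explicit Whitney-type operator from \eqref{Extension} slicewise. Concretely, write points of $\mathbb{R}^{n+m}$ as $(x,y)$ with $x\in\mathbb{R}^n$, $y\in\mathbb{R}^m$. Since $\Omega_1$ is a $W^{1,p}$-extension domain, Theorem \ref{thm2} gives that $\overline{\Omega_1}$ is Ahlfors regular, so it carries the Whitney decomposition $W_{\overline{\Omega_1}}=\{Q_k\}$, the partition of unity $\Phi_{\overline{\Omega_1}}$, the quasi-cubes $\{H_Q\}$ and the projections $P_{H_Q}$ of Corollary \ref{corollary3}. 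For fixed $y\in\Omega_2$ the slice $u(\cdot,y)$ is in $W^{1,p}(\Omega_1)$ for a.e.\ $y$ (by Fubini, after choosing an ACL representative), and we define
\begin{equation}
E_1u(x,y):=\begin{cases} u(x,y), & x\in\overline{\Omega_1},\ y\in\Omega_2,\\[2pt] \sum_{Q\in W_{\overline{\Omega_1}}}\phi_Q(x)\,P_{H_Q}\big(u(\cdot,y)\big), & x\in\mathbb{R}^n\setminus\overline{\Omega_1},\ y\in\Omega_2.\end{cases}\nonumber
\end{equation}
Because the same Whitney data is used for every $y$, the coefficients $P_{H_Q}(u(\cdot,y))$ depend on $y$ only through the linear averaging over $H_Q$, so $E_1u$ is jointly measurable and, for a.e.\ $y$, $E_1u(\cdot,y)$ is exactly the Shvartsman extension of $u(\cdot,y)$.

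First I would control the $x$-direction. For a.e.\ fixed $y$, Theorem \ref{thm3} (applied to $\Omega_1$) gives $\|E_1u(\cdot,y)\|_{W^{1,p}(\mathbb{R}^n)}\le C\|u(\cdot,y)\|_{W^{1,p}(\Omega_1)}$; raising to the $p$-th power and integrating in $y$ over $\Omega_2$ yields, via Fubini,
\begin{equation}
\|E_1u\|_{L^p(\mathbb{R}^n\times\Omega_2)}^p+\sum_{i=1}^{n}\Big\|\tfrac{\partial}{\partial x_i}E_1u\Big\|_{L^p(\mathbb{R}^n\times\Omega_2)}^p\le C\,\|u\|_{W^{1,p}(\Omega_1\times\Omega_2)}^p.\nonumber
\end{equation}
(Here one must check that the distributional $x$-derivatives of $E_1u$ on $\mathbb{R}^n\times\Omega_2$ are obtained slicewise; this is routine once joint measurability and the local $L^p$ bounds are in hand, using that $E_1u$ is locally a finite sum in the open set $(\mathbb{R}^n\setminus\overline{\Omega_1})\times\Omega_2$ and agrees with $u\in W^{1,p}$ on the interior of $\overline{\Omega_1}\times\Omega_2$, while $|\partial\Omega_1|=0$ by Lemma \ref{closure}.)

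The main obstacle is the $y$-direction: we must bound $\partial_{y_j}E_1u$ in $L^p(\mathbb{R}^n\times\Omega_2)$. On $\overline{\Omega_1}\times\Omega_2$ this derivative is just $\partial_{y_j}u$, which is controlled. On $(\mathbb{R}^n\setminus\overline{\Omega_1})\times\Omega_2$ we differentiate the sum: since $\phi_Q$ does not depend on $y$,
\begin{equation}
\frac{\partial}{\partial y_j}E_1u(x,y)=\sum_{Q}\phi_Q(x)\,P_{H_Q}\!\Big(\frac{\partial u}{\partial y_j}(\cdot,y)\Big),\nonumber
\end{equation}
because $P_{H_Q}$ is the linear average over $H_Q\subset\overline{\Omega_1}$ and commutes with $\partial_{y_j}$. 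Thus $\partial_{y_j}E_1u(\cdot,y)$ is \emph{itself} the Shvartsman-type extension of the function $\partial_{y_j}u(\cdot,y)\in L^p(\Omega_1)$ — but only the $L^p$-extension (formula \eqref{Extension}), not the $W^{1,p}$-one, since $\partial_{y_j}u(\cdot,y)$ need not lie in $W^{1,p}(\Omega_1)$. So the key estimate I need is that this operator is bounded $L^p(\overline{\Omega_1})\to L^p(\mathbb{R}^n)$ with norm depending only on $n,p,C_A,\delta_A$; this should follow from Corollary \ref{corollary3} together with the finite-overlap properties (d) of $\Phi_S$, Lemma \ref{properties}(2), and Theorem \ref{quasicube}(ii)--(iii), by the standard Whitney-summation argument: group Whitney cubes by dyadic size and comparability, use $\|\,\cdot\,-P_{H_Q}(\cdot)\|_{L^p(H_Q)}\lesssim \|\cdot\|_{L^p(H_Q)}$ and $|Q|\approx|H_Q|$ to pass from $Q$-sums to an $\ell^p$-sum of $L^p(H_Q)$-norms, and finally invoke the bounded overlap $\sum_Q\chi_{H_Q}\le\gamma_2$ to collapse back to $\|\partial_{y_j}u(\cdot,y)\|_{L^p(\Omega_1)}$. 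Once this "$L^p$-boundedness of the Whitney extension operator" lemma is established, raising to the $p$-th power and integrating over $y\in\Omega_2$ gives
\begin{equation}
\Big\|\frac{\partial}{\partial y_j}E_1u\Big\|_{L^p(\mathbb{R}^n\times\Omega_2)}\le C\Big\|\frac{\partial u}{\partial y_j}\Big\|_{L^p(\Omega_1\times\Omega_2)}\le C\|u\|_{W^{1,p}(\Omega_1\times\Omega_2)}.\nonumber
\end{equation}
Combining the $x$- and $y$-estimates, together with the elementary fact that $E_1u|_{\Omega_1\times\Omega_2}=u$ and that $E_1u\in W^{1,p}$ (verified by the ACL characterization recalled in the introduction: $E_1u$ is absolutely continuous on a.e.\ axis-parallel segment because it is so in $x$ for a.e.\ $y$ and in $y$ for a.e.\ $x$, and its classical partials are the $L^p$ functions just estimated), finishes the proof.
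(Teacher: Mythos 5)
Your proposal is correct and follows essentially the same route as the paper: extend slicewise in $x$ with Shvartsman's operator \eqref{Extension}, obtain the $L^p$ bound and the $x$-derivatives from the slicewise $W^{1,p}$ estimate of Theorem \ref{thm3} plus Fubini, and obtain the $y$-derivatives from the identity $\partial_{y_j}E_1u(x,y)=\sum_{Q}\phi_Q(x)P_{H_Q}\bigl(\partial_{y_j}u(\cdot,y)\bigr)$ together with the $L^p(\Omega_1)\to L^p(\mathbb{R}^n)$ boundedness of the extension operator. The only differences are in execution rather than substance: the paper first reduces to the dense class $C^{1}\cap L^{\infty}\cap W^{1,p}$ so that the commutation of $\partial_{y_j}$ with $P_{H_Q}$ and the absolute continuity in $y$ can be verified by a classical fundamental-theorem-of-calculus computation, whereas you argue directly with weak derivatives via Fubini (which works, but those "routine" commutation and term-by-term differentiation steps are exactly what the paper's density reduction is there to make painless), and you sketch a proof of the $L^p$-boundedness of the Whitney-type operator that the paper simply imports from Shvartsman's construction.
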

\begin{proof}
Theorem 2.3.2 in Ziemer's book \cite{ziemer} tells us that $C^{\infty}(\Omega_{1}\times\Omega_{2})\cap W^{1,p}(\Omega_{1}\times\Omega_{2})$ is dense in $W^{1,p}(\Omega_{1}\times\Omega_{2})$. With a small mollification in the proof of this result, it is easy to see that $C^{1}(\Omega_{1}\times\Omega_{2})\cap L^{\infty}(\Omega_{1}\times\Omega_{2})\cap W^{1,p}(\Omega_{1}\times\Omega_{2})$ is dense in $W^{1,p}(\Omega_{1}\times\Omega_{2})$. We begin by showing that we can extend the functions in $C^{1}(\Omega_{1}\times\Omega_{2})\cap L^{\infty}(\Omega_{1}\times\Omega_{2})\cap W^{1,p}(\Omega_{1}\times\Omega_{2})$.

According to Theorem \ref{thm2}, $\Omega_{1}$ is Ahlfors regular. Let $u\in C^{1}(\Omega_{1}\times\Omega_{2})\cap L^{\infty}(\Omega_{1}\times\Omega_{2})\cap W^{1,p}(\Omega_{1}\times\Omega_{2})$. Then for $y\in\Omega_{2}$, using the extension (\ref{Extension}), we set
\begin{equation}\label{Extension11}
E_{1}u(x,y)=Eu_{y}(x):=\left\{\begin{array}{ll}
u_{y}(x),&\ \ x\in \overline{\Omega_{1}},\\

\sum_{Q\in W_{\Omega_1}}\phi_{Q}(x)P_{H_{Q}}u_{y}(x),&\ \  x\in\mathbb{R}^{n}\setminus\overline{\Omega_{1}}.
\end{array}\right.
\end{equation}
Here $u_{y}$ in (\ref{Extension11}) is the zero extension of $u_{y}$ to the boundary $\partial\Omega_{1}$. In order to show $E_{1}u\in W^{1,p}(\mathbb{R}^{n}\times\Omega_{2})$, we need to show that $E_{1}u\in L^{p}(\mathbb{R}^{n}\times\Omega_{2})$, and for every $\beta$ with $|\beta|=1$, we need to find a function $v_{\beta}\in L^{p}(\mathbb{R}^{n}\times\Omega_{2})$, such that for every $\psi\in C_{0}^{\infty}(\mathbb{R}^{n}\times\Omega_{2})$ we have 
\begin{equation}
\int_{\mathbb{R}^{n}\times\Omega_{2}}E_{1}u(x,y)D^{\beta}\psi(x,y)dxdy=-\int_{\mathbb{R}^{n}\times\Omega_{2}}v_{\beta}(x,y)\psi(x,y)dxdy.\nonumber
\end{equation}

For the convenience of discussion and reading, we divide the rest of proof into three steps.

\textbf{Step 1}: In this step, we show that $E_{1}u\in L^{p}(\mathbb{R}^{n}\times\Omega_{2})$ and that the $L^{p}$-norm of $E_{1}u$ is controlled by the $W^{1,p}$-norm of $u$. By the Fubini theorem, $u_{y}\in W^{1,p}(\Omega_{1})$ for almost every $y\in\Omega_{2}$.  Since $\Omega_{1}$ is a $W^{1,p}$-extension domain, Theorem \ref{thm3} gives that $E_{1}u(x,y)=Eu_{y}(x)\in W^{1,p}(\mathbb{R}^{n})$ and 
\begin{equation}
\|Eu_{y}\|_{L^{p}(\mathbb{R}^{n})}\leq\|Eu_{y}\|_{W^{1,p}(\mathbb{R}^{n})}\leq C\|u_{y}\|_{W^{1,p}(\Omega_{1})},\nonumber
\end{equation}
 for every $y\in\Omega_{2}$ with $u_{y}\in W^{1,p}(\Omega_{1})$. Then by integrating with respect to $y\in\Omega_{2}$, we obtain the desired result.
 
\textbf{Step 2}: In this step, we show that there exist functions $\frac{\partial}{\partial x_{i}}E_{1}u\in L^{p}(\mathbb{R}^{n}\times\Omega_{2})(i=1,...,n)$ such that 
\begin{equation}
\int_{\mathbb{R}^{n}\times\Omega_{2}}\frac{\partial}{\partial x_{i}}E_{1}u(x,y)\psi(x,y)dxdy=-\int_{\mathbb{R}^{n}\times\Omega_{2}}E_{1}u(x,y)\frac{\partial}{\partial x_{i}}\psi(x,y)dxdy\nonumber
\end{equation}
for every $\psi\in C_{c}^{\infty}(\mathbb{R}^{n}\times\Omega_{2})$. For simplicity of notation, we assume that $i=1$.

Fubini's theorem tells us that $u_{y}\in W^{1,p}(\Omega_{1})$ for almost every $y\in \Omega_{2}$. Then by Theorem \ref{thm3}, (\ref{Extension11}) gives an extension $Eu_{y}\in W^{1,p}(\mathbb{R}^{n})$ for every $y\in\Omega_{2}$ with $u_{y}\in W^{1,p}(\Omega_{1})$. Then we set 
\begin{equation}\label{partialx}
\frac{\partial}{\partial x_{1}}E_{1}u(x,y):=\left\{\begin{array}{ll}
\frac{\partial}{\partial x_{1}}Eu_{y}(x),&\ \ {\rm if}\ y\in\Omega_{2}\ {\rm with}\ u_{y}\in W^{1,p}(\Omega_{1}),\\

0,&\ \  {\rm otherwise}.
\end{array}\right.
\end{equation}
Since $Eu_{y}\in W^{1,p}(\mathbb{R}^{n})$ for almost every $y\in \Omega_{2}$, using Fubini's theorem, we obtain
\begin{eqnarray}
\int_{\mathbb{R}^{n}\times\Omega_{2}}\frac{\partial}{\partial x_{1}}E_{1}u(x,y)\psi(x,y)dxdy&=&\int_{\Omega_{2}}\int_{\mathbb{R}^{n}}\frac{\partial}{\partial x_{1}}Eu_{y}(x)\psi(x,y)dxdy\nonumber\\
                                                                                        &=&-\int_{\Omega_{2}}\int_{\mathbb{R}^{n}}Eu_{y}(x)\frac{\partial}{\partial x_{1}}\psi(x,y)dxdy\nonumber\\
																																												&=&-\int_{\mathbb{R}^{n}\times\Omega_{2}}E_{1}u(x,y)\frac{\partial}{\partial x_{1}}\psi(x,y)dxdy,\nonumber
\end{eqnarray}
which means that (\ref{partialx}) gives a first order distributional derivative of $E_{1}u$ with respect to $x_{1}$. Then using the Fubini theorem twice and the fact that the linear operator $E$ from $W^{1,p}(\Omega_{1})$ to $W^{1,p}(\mathbb{R}^{n})$ is bounded, we obtain
\begin{eqnarray}
\int_{\mathbb{R}^{n}\times\Omega_{2}}|\frac{\partial}{\partial x_{1}}E_{1}u(x,y)|^{p}dxdy&=&\int_{\Omega_{2}}\int_{\mathbb{R}^{n}}|\frac{\partial}{\partial x_{1}}Eu_{y}(x)|^{p}dxdy\nonumber\\
                                                                                         &\leq&C\int_{\Omega_{2}}\int_{\Omega_{1}}\left(|u_{y}(x)|^{p}+\big|\frac{\partial}{\partial x_{1}}u_{y}(x)\big|^{p}\right)dxdy\nonumber\\
																																												&\leq&C\int_{\Omega_{1}\times\Omega_{2}}\left(|u(x,y)|^{p}+\big|\frac{\partial}{\partial x_{1}}u(x,y)\big|^{p}\right)dxdy,\nonumber
\end{eqnarray}
we have obtained the desired norm estimate.

\textbf{Step 3}: In this step, we show that there exist functions $\frac{\partial}{\partial y_{j}}E_{1}u\in L^{p}(\mathbb{R}^{n}\times\Omega_{2}) (j=1,...,m)$ such that 
\begin{equation}
\int_{\mathbb{R}^{n}\times\Omega_{2}}\frac{\partial}{\partial y_{j}}E_{1}u(x,y)\psi(x,y)dxdy=-\int_{\mathbb{R}^{n}\times\Omega_{2}}E_{1}u(x,y)\frac{\partial}{\partial y_{j}}\psi(x,y)dxdy\nonumber
\end{equation}
for every $\psi\in C_{c}^{\infty}(\mathbb{R}^{n}\times\Omega_{2})$. For simplicity of notation, we assume that $j=1$.

Consider the projection 
\begin{equation} 
P_{1}:\Omega_{2}\rightarrow\mathbb{R}^{m-1},\nonumber
\end{equation}
which is defined by setting
\begin{equation}
 P_{1}(y)=(y_{2},y_{3},...,y_{m})=:\check{y}_{1}\ {\rm for}\ y=(y_{1},...,y_{m})\in\Omega_{2}.\nonumber
\end{equation}
Set $S_{1}^{\check{y}_{1}}:=P_{1}^{-1}(\check{y}_{1})\subset\Omega_{2}$, the preimage of $\check{y}_{1}\in P_{1}(\Omega_{2})$. Then $S_{1}^{\check{y}_{1}}$ is the union of at most countably many pairwise disjoint segments.  

Fix $x\in\mathbb{R}^{n}\setminus\overline{\Omega_{1}}$ and $\check{y}_{1}\in P_{1}(\Omega_{2})$. To begin, we assume that $S_{1}^{\check{y}_{1}}$ is a single segment. Now for $y_{1}^{1},\check{y}_{1}, y_{1}^{2},\check{y}_{1}\in S_{1}^{\check{y}_{1}}$, according to (\ref{Extension11}), we have
\begin{eqnarray}\label{eq}
E_{1}u(x,y_{1}^{1},\check{y}_{1})&-&E_{1}u(x,y_{1}^{2},\check{y}_{1})\\
                                &=&\sum_{Q\in W_{\overline{\Omega_{1}}}}\phi_{Q}(x)\left((P_{H_{Q}}u(x, y_{1}^{1},\check{y}_{1})-(P_{H_{Q}}u(x, y_{1}^{2},\check{y}_{1})\right).\nonumber
\end{eqnarray}
 By the definition (\ref{projection}) of $P_{H_{Q}}u$ and the facts that $u$ is $C^{1}$ and $H_{Q}\times S_{1}^{\check{y}_{1}}\subset\Omega_{1}\times\Omega_{2}$, we have
\begin{eqnarray}\label{pq}
(P_{H_{Q}}u)(x, y_{1}^{1},\check{y}_{1})&-&(P_{H_{Q}}u)(x, y_{1}^{2},\check{y}_{1})\\
                                      &=&\bint_{H_{Q}}\left(u(w,y_{1}^{1},\check{y}_{1})-u(w, y_{1}^{2},\check{y}_{1})\right)dw \nonumber\\
																			&=&\bint_{H_{Q}}\left(\int_{y_{1}^{2}}^{y_{1}^{1}}\frac{\partial u(w, s,\check{y}_{1})}{\partial y_{1}}ds\right)dw.\nonumber
\end{eqnarray}
Combining (\ref{eq}) and (\ref{pq}), we obtain 
\begin{eqnarray}\label{variation}
E_{1}u(x, y_{1}^{1},\check{y}_{1})&-&E_{1}u(x, y_{1}^{2},\check{y}_{1})\\
                                &=&\sum_{Q\in W_{\overline{\Omega_{1}}}}\phi_{Q}(x)\bint_{H_{Q}}\int_{y_{1}^{2}}^{y_{1}^{1}}\frac{\partial u(w,s,\check{y}_{1})}{\partial y_{1}}dsdw;\nonumber
\end{eqnarray}
notice that $x$ is contained in the support of only finite many $\Phi_{Q}$, hence $E_{1}u(x,s,\check{y}_{1})$ is absolutely continuous as a function of $s$ on $S_{1}^{\check{y}_{1}}$. By repeating this for each component of $S_{1}^{\check{y}_{1}}$, we conclude that $E_{1}u(x,s,\check{y}_{1})$ is absolutely continuous as a function of $s$ on every component of $S_{1}^{\check{y}_{1}}$. From (\ref{variation}) and the Lebesgue differentiation theorem, we deduce that 
\begin{eqnarray}\label{commutative}
\frac{\partial E_{1}u(x,s,\check{y}_{1})}{\partial y_{1}}&:=&\lim_{s'\rightarrow s}\frac{E_{1}u(x,s',\check{y}_{1})-E_{1}u(x,s,\check{y}_{1})}{s'-s}\\\
                                                                                    &=&\sum_{Q\in W_{\overline{\Omega_{1}}}}\phi_{Q}(x)\bint_{H_{Q}}\frac{\partial u(w,s,\check{y}_{1})}{\partial y_{1}}dw=E_{1}\frac{\partial u(x, s,\check{y}_{1})}{\partial y_{1}},\nonumber
\end{eqnarray}
exists for $\mathcal{H}^{1}-a.e.$ $s$ with $(s,\check{y}_{1})\in S_{1}^{\check{y}_{1}}$. Fix $\psi\in C^{\infty}_{c}(\mathbb{R}^{n}\times\Omega_{2})$. Since $E_{1}u(x,s,\check{y}_{1})$ is absolutely continuous as a function of $s$ on each segment of $S_{1}^{\check{y}_{1}}$, we conclude that 
\begin{eqnarray}
\int_{S_{1}^{\check{y}_{1}}}E_{1}u(x,s,\check{y}_{1})\frac{\partial\psi(x,s,\check{y}_{1})}{\partial y_{1}}ds
																																								 &=&-\int_{S_{1}^{\check{y}_{1}}}\frac{\partial E_{1}u(x,s,\check{y}_{1})}{\partial y_{1}}\psi(x,s,\check{y}_{1})ds.\nonumber
\end{eqnarray}

In order to complete the definition of $\frac{\partial E_{1}u}{\partial y_{1}}$, we define $\frac{\partial E_{1}u}{\partial y_{1}}=\frac{\partial u}{\partial y_{1}}$ when $(x,y)\in\Omega_{1}\times\Omega_{2}$ and $\frac{\partial E_{1}u}{\partial y_{1}}=0$ when $(x,y)\in\partial\Omega_{1}\times\Omega_{2}$. Then let us show that $\frac{\partial E_{1}u}{\partial y_{1}}$ is a first order distributional derivative of $E_{1}u$ with respect to $y_{1}$-coordinate. By the Fubini theorem, (\ref{commutative}) and the fact that $|\partial\Omega_{1}|=0$, we have  
\begin{eqnarray} 
\int_{\mathbb{R}^{n}\times\Omega_{2}}E_{1}u(x,y)\frac{\partial\psi(x,y)}{\partial y_{1}}dxdy&=&\int_{\mathbb{R}^{n}}\int_{P_{1}(\Omega_{2})}\int_{S_{1}^{\check{y}_{1}}}E_{1}u(x,y)\frac{\partial\psi(x,y)}{\partial y_{1}}dy_{1}d\check{y}_{1}dx\nonumber\\
																																												    &=&-\int_{\mathbb{R}^{n}}\int_{P_{1}(\Omega_{2})}\int_{S_{1}^{\check{y}_{1}}}\frac{\partial E_{1}u(x,y)}{\partial y_{1}}\psi(x,y)dy_{1}d\check{y}_{1}dx\nonumber\\
																																												    &=&-\int_{\mathbb{R}^{n}\times\Omega_{2}}\frac{\partial E_{1}u(x,y)}{\partial y_{1}}\psi(x,y)dxdy.\nonumber
\end{eqnarray}

Now we show that $\frac{\partial E_{1}u}{\partial y_{1}}\in L^{p}(\mathbb{R}^{n}\times\Omega_{2})$ and that its norm is controlled by the Sobolev norm of $u$. Since $|\partial\Omega_{1}|=0$, we have
\begin{eqnarray}
\int_{\mathbb{R}^{n}\times\Omega_{2}}\big|\frac{\partial E_{1}u(x,y)}{\partial y_{1}}\big|^{p}dxdy
                                                                                              &=&\int_{\Omega_{1}\times\Omega_{2}}\big|\frac{\partial u(x,y)}{\partial y_{1}}\big|^{p}dxdy\nonumber\\
																																															& &+\int_{(\mathbb{R}^{n}\setminus\overline{\Omega_{1}})\times\Omega_{2}}\big|E_{1}\frac{\partial u(x,y)}{\partial y_{1}}\big|^{p}dxdy.\nonumber
\end{eqnarray}

As we know, for almost every $y\in\Omega_{2}$, $\frac{\partial u}{\partial y_{1}}\big|_{y}\in L^{p}(\Omega_{1})$. Using the fact that $E: L^{p}(\Omega_{1})\rightarrow L^{p}(\mathbb{R}^{n})$ is a bounded linear operator, we obtain
\begin{equation}
\int_{\mathbb{R}^{n}}\big|E_{1}\frac{\partial u(x,y)}{\partial y_{1}}\big|^{p}dx\leq C\int_{\Omega_{1}}\big|\frac{\partial u(x,y)}{\partial y_{1}}\big|^{p}dx,\nonumber
\end{equation}
for almost every $y\in\Omega_{2}$. Then we do the integration with respect to $y\in \Omega_{2}$ on the two sides of the inequality above, we obtain the desired inequality
\begin{equation}
\int_{\mathbb{R}^{n}\times\Omega_{2}}\big|\frac{\partial E_{1}u(x,y)}{\partial y_{1}}\big|^{p}dxdy\leq C\int_{\Omega_{1}\times\Omega_{2}}\big|\frac{\partial u(x,y)}{\partial y_{1}}\big|^{p}dxdy.\nonumber
\end{equation}

In conclusion, we have showed that the linear extension operator $E_{1}$ is bounded from $C^{1}(\Omega_{1}\times\Omega_{2})\cap L^{\infty}(\Omega_{1}\times\Omega_{2})\cap W^{1,p}(\Omega_{1}\times\Omega_{2})$ to $W^{1,p}(\mathbb{R}^{n}\times\Omega_{2})$ for our fixed $1<p<\infty$. Since $C^{1}(\Omega_{1}\times\Omega_{2})\cap L^{\infty}(\Omega_{1}\times\Omega_{2})\cap W^{1,p}(\Omega_{1}\times\Omega_{2})$ is dense in $W^{1,p}(\Omega_{1}\times\Omega_{2})$, $E_{1}$ extends to a bounded linear extension operator from $W^{1,p}(\Omega_{1}\times\Omega_{2})$ to $W^{1,p}(\mathbb{R}^{n}\times\Omega_{2})$.
\end{proof}

\begin{proof}[Proof of Theorem~\ref{theorem2}]
Regarding the first part of the claim,
by Theorem~\ref{theorem1} we have a bounded extension operator 
$E_1:W^{1,p}(\Omega_1\times \Omega_2)\to W^{1,p}(\mathbb{R}^{n}\times\Omega_{2}),$
and it thus suffices to extend functions in  
$W^{1,p}(\mathbb{R}^{n}\times\Omega_{2})$ to 
$W^{1,p}(\mathbb{R}^{n}\times \mathbb{R}^m).$ Given $u\in W^{1,p}(\mathbb{R}^{n}\times\Omega_{2}),$ define $\hat u(x,y)=u(y,x).$ Then $\hat u \in 
W^{1,p}(\Omega_2\times \mathbb{R}^n)$ and the desired extension is obtained via
Theorem~\ref{theorem1} as $\Omega_2\subset \mathbb{R}^m$ is a $W^{1,p}$-extension domain.

Towards the second part, by symmetry, it suffices to prove that 
$\Omega_1\subset \mathbb{R}^n$ must be a $W^{1,p}$-extension domain whenever 
$\Omega_1\times \Omega_2$ is such a domain. 

Suppose first that $\Omega_2$ has finite measure. Given $u\in W^{1,p}(\Omega),$
define $v(x,y)=u(x).$ Then $v\in W^{1,p}(\Omega_1\times \Omega_2).$ Let
$Ev\in  W^{1,p}(\mathbb{R}^n\times \mathbb{R}^m)$ be an extension of $v.$ Then
$Ev\in  W^{1,p}(\mathbb{R}^n\times \{y\})$ for almost every $y\in \Omega_2.$
This follows via the Fubini theorem from the ACL-characterization of $W^{1,p}$
given in our introduction.
Since $v(x,y)=u(x),$ we conclude that $u$ must be the restriction of some
function $w\in W^{1,p}(\mathbb{R}^n).$ This allows us to infer from 
Theorem~\ref{thm2} that $\Omega_1$ must be a $W^{1,p}$-extension domain.

In case $\Omega_2$ has infinite measure, we fix a ball $B\subset \Omega_2$ and
pick a smooth function $\psi$ with compact support so that $\psi$ is identically $1$ on $B.$ We still define $v$ as above and set $w=\psi v.$ Then $w\in W^{1,p}(\Omega_1\times \Omega_2)$ and we may repeat the above argument as $w(x,y)=u(x)$ for almost every $y\in B\subset \Omega_2.$
\end{proof}

\noindent Pekka Koskela\\
  Zheng Zhu

\medskip

\noindent
Department of Mathematics and Statistics, University of Jyv\"askyl\"a, FI-40014, Finland.

\noindent{\it E-mail address}:  \texttt{pekka.j.koskela@jyu.fi}\\
                                 \texttt{zheng.z.zhu@jyu.fi}


\begin{thebibliography}{99}

\vspace{-0.3cm}
\bibitem{Brud1}
Yu. A. Brudnyi, 
A multidimensional analog of a certain theorem of Whitney,
Mat. Sb. (N.S.)82(124), 169-191 (1970); English transl.: Math. USSR-Izv. 4, 568-586 (1970).

\vspace{-0.3cm}
\bibitem{Brud2}
Yu. A. Brudnyi,
Approximations of functions of $n$ variables by quasi-polynomials, 
Izv. Akad. Nauk SSSR Ser. Mat. 34,564-583 (1970); English transl.: Math. USSR-Izv, 4. 568-596 (1970). 

\vspace{-0.3cm}
\bibitem{Brud}
Yu. A. Brudnyi, 
Spaces that are definable by means of local approximations,
Trudy Moskov. Math. Obshch. 24, 69-132 (1971); English transl.: Trans. Moscow Math Soc. 24, 73-139 (1974).

\vspace{-0.3cm}
\bibitem{Brud3}
Yu. A. Brudnyi, 
Piecewise polynomial approximation, embedding theorem and rational approximation,
Lecture Notes in Mathematics Vol. 556 (Springer-Verlag, Berlin, 1976), pp. 73-98.

\vspace{-0.3cm}
\bibitem{Brud4}
Yu. A. Brudnyi, 
Investigation in the Theory of Local Approximations, Doctoral Dissertation, Leningrad University (1977) (in Russian).

\vspace{-0.3cm}
\bibitem{Brud5}
Yu. A. Brudnyi,
Adaptive approximation of functions with singularities,
Trudy Moskov. Mat. Obshch. 55, 149-242 (1994); English transl.: Trans. Moscow Math. Soc. 55, 123-186 (1995).

\vspace{-0.3cm}
\bibitem{BruGa}
Yu. A. Brudnyi and M. I. Ganzburg,
On an Extremal Problem for Polynomials in $n$ Variables,
Izv. Akad. Nauk SSSR Ser. Mat. 37, 344-355 (1973); English transl.: Math. USSR Izv. 7, 345-356 (1973).

\vspace{-0.3cm}
\bibitem{Calde}
A. P. Calder\'on,
Estimates for singular integral operators in terms of maximal functions,
Studia Math. 44, 563-582 (1972).

\vspace{-0.3cm}
\bibitem{HKT}
P. Haj\l asz, P. Koskela and H. Tuominen,
Sobolev embeddings, extensions and measure density condition,
J. Funct. Anal. 254 (2008), no.5, 1217-1234.

\vspace{-0.3cm}
\bibitem{HKT2}
P. Haj\l asz, P. Koskela and H. Tuominen,
Measure density and extendability of Sobolev functions,
Rev. Mat. Iberoam. 24 (2008), no. 2, 645-669.

\vspace{-0.3cm}
\bibitem{Pekka}
S. Hencl and P. Koskela,
Lectures on Mappings of finite distortion, 
Lecture Notes in Mathematics. 2096, Springer, Cham, 2014.

\vspace{-0.3cm}
\bibitem{KRZ}
P. Koskela, T. Rajala and Y. Zhang,
A geometric characterization of planar Sobolev extension domains, 
preprint.


\vspace{-0.3cm}
\bibitem{Pekka98}
P. Koskela,
Extensions and imbeddings,
J. Funct. Anal. 159, 369-384 (1998)

\vspace{-0.3cm}
\bibitem{Shvar}
P. Shvartsman,
Local approximations and intrinsic characterizations of spaces of smooth functions on regular subset of $\mathbb{R}^{n}$,
Math. Nachr. 279 (11) (2006) 1212-1241.

\vspace{-0.3cm}
\bibitem{Shvar1}
P. Shvartsman, 
On extension of Sobolev functions defined on regular subsets of metric measure spaces, 
J. Approx. Theory. 144 (2007) 139–161.

\vspace{-0.3cm}
\bibitem{Shvar2}
P. Shvartsman, 
On Sobolev extension domains in $\mathbb{R}^{n}$,
J. Funct. Anal. 258 (2010), no. 7, 2205-2245.


\vspace{-0.3cm}
\bibitem{Stein}
E. M. Stein,
Singular Integrals and Differentiability Properties of Functions (Princeton Univ. Press, Princeton, NJ, 1970).

\vspace{-0.3cm}
\bibitem{Trie}
H. Triebel,
Theory of Function Spaces. \uppercase\expandafter{\romannumeral2},
Monographs in Mathematics Vol. 84 (Birkh\"auser, Basel, 1992).

\vspace{-0.3cm}
\bibitem{Whitney}
H. Whitney,
Functions differentiable on the boundaries of regions,
Ann of Math. 35(3) (1934) 482-485.

\vspace{-0.3cm}
\bibitem{ziemer}
W. P. Ziemer,
Weakly Differentiable Functions,
Graduate Texts in Mathematics, vol. 120. New York: Springer-verlag. 

\vspace{-0.3cm}
\bibitem{zobin}
N. Zobin,
Whitney's problem on extendability of functions and an intrinsic metric,
Adv. Math. 133 (1998) 96-132.

\end{thebibliography}
\end{document}